\numberwithin{equation}{section}
\newtheorem{thm}{Theorem}[section]
\newtheorem{lem}[thm]{Lemma}
\newtheorem{defin}[thm]{Definition}
\newtheorem{remark}[thm]{Remark}
\begin{document}

\begin{center}
\textbf{{\large {\ Forward and inverse problems for a mixed-type equation with the Caputo fractional derivative and Dezin-type non-local condition }}}\\[0pt]
\medskip \textbf{R.R.Ashurov $^{1,2}$, U.Kh.Dusanova$^{1,3}$, N.Sh.Nuraliyeva$^{1,3}$}\\[0pt]
\textit{ashurovr@gmail.com, umidakhon8996@gmail.com, n.navbahor2197@gmail.com \\[0pt]}
\medskip \textit{\ $^1$V.I. Romanovskiy Institute of Mathematics,
  Uzbekistan Academy of Sciences
Tashkent, Uzbekistan;
\\
$^2$ Central Asian University,
264, Mirzo Ulugbek District, Milliy Bog St., 111221, Tashkent, Uzbekistan;
\\
$^3$Karshi State University,
Karshi 180119, Uzbekistan;}

\end{center}

\textbf{Abstract}: This work is dedicated to the study of a mixed-type partial differential equation involving a Caputo fractional derivative in the time domain $t > 0$ and a classical parabolic equation in the domain $t < 0$, along with Dezin-type non-local boundary and gluing conditions. The forward and inverse problems are studied in detail. For the forward problem, the existence and uniqueness of solutions are established using the Fourier method, under appropriate assumptions on the initial data and the right-hand side. We also analyze the dependency of solvability on the parameter $\lambda$, from the Dezin-type condition. For the inverse problem, where the right-hand side is separable as $F(x,t) = f(x)g(t)$ (the unknown function is  $f(x)$), the existence and uniqueness of a solution are proven under a certain condition on the function $g(t)$ (a constant sign is sufficient).

\vskip 0.3cm \noindent {\it AMS 2000 Mathematics Subject
Classifications} :
Primary 35M10; Secondary 35R11.\\
{\it Key words}:  mixed type equation, the Caputo derivatives, forward and inverse problems, Fourier method, Dezin-type nonlocal condition. 

\section{Introduction}

Numerous researchers have investigated boundary value problems for differential equations of mixed type. These problems first attracted attention through the work of S. Chaplygin, who applied mixed-type partial differential equations to model gas dynamics. Later, A. Bitsadze \cite{Bits} demonstrated the ill-posedness of the Dirichlet problem for the equation $u_{xx} + \operatorname{sgn}(y) u_{yy} = 0$. Subsequent studies on various boundary value problems for such equations were compiled in the monographs \cite{Smir}, \cite{Moys}.

In this paper, we study a mixed type equation, one part of which contains a fractional derivative. Let us move on to the exact formulation of the problem under consideration.

 Let $0 < \rho < 1$. The Caputo fractional derivative of order $\rho$ of a function $f$ is given by (see \cite{Pskhu})
 $$
D_t^{\rho}f(t)=\frac{1}{\Gamma(1-\rho)}\int\limits_0^t \frac{f^{'}(\tau)}{(t-\tau)^{1-\rho}}d\tau,\quad t>0,
$$ 
provided the right-hand side exists. Here $\Gamma(\cdot)$ denotes the well-known  gamma function. 

Let $\Omega$ be an arbitrary $N$ dimensional domain with a sufficiently smooth boundary $\partial\Omega$.
Consider the following mixed type equation:
\begin{equation}\label{prob1.1}
\left\{
\begin{aligned}
& D_t^\rho u-\Delta u=F(x,t),\quad x\in\Omega, \quad 0<t\le\beta, \\
& u_{t}+\Delta u=F(x,t),\quad x\in\Omega, \quad -\alpha <t<0 ,\\
\end{aligned}
\right.\end{equation}
where $F(x,t)$ is a continuous function and $\alpha>0$, $\beta>0$ are given real numbers and $\Delta$ is the Laplace operator. 

\textbf{Dezin problem.} Find a function  $u(x,t)$ satisfying equation (\ref{prob1.1}) and the boundary condition  
\begin{equation}\label{prob1.2}
u(x,t)|_{\partial\Omega}=0,\quad t\in[\alpha,\beta],
\end{equation}
and gluing condition 
\begin{equation}\label{prob1.3}
\lim_{t\to +0}u(x,t)=\lim_{t\to -0}u(x,t),\quad x\in\Omega,
\end{equation}
and also a non-local condition
\begin{equation}\label{prob1.4}
u(x,-\alpha)=\lambda u(x,0),\quad x\in\Omega,
\end{equation}
where  $\lambda=const$, $\lambda\ne 0$.

This problem is called the Dezin problem due to condition \eqref{prob1.4}. Note, that if $\lambda=0$ then we arrive at the backward problem for subdiffusion equation.

\begin{defin}\label{def1}
A function  $u(x,t)\in AC(\overline{\Omega}\times [0,\beta])$  with the properties 

\begin{enumerate}
        \item
	$ u(x,t)\in C(\overline{\Omega}\times[-\alpha,\beta])$,
	\item
	$\Delta u(x,t)\in C(\overline{\Omega}\times(-\alpha,0)\cup (0,\beta])$,
	\item $D_{t}^{\rho}u(x,t)\in C(\overline{\Omega}\times(0,\beta])$,
    \item $u_{t}(x,t)\in C(\overline{\Omega}\times(-\alpha,0))$.
\end{enumerate}
and satisfying conditions \eqref{prob1.1}-\eqref{prob1.4} is called the (classical) solution of the problem \eqref{prob1.1}-\eqref{prob1.4}.
	\end{defin}

In equation \eqref{prob1.1}, the derivatives of the function $u(x,t)$ are considered in the open domain. The condition of continuity for these derivatives in the closed domain $\overline{\Omega}$, as suggested by O.A. Ladyzhenskaya \cite{Lad}, is imposed to facilitate a straightforward proof of the solution's uniqueness. The requirement of absolute continuity of the solution at $t\geq 0$ is necessary to exclude singular functions from consideration, due to which the uniqueness of the solution is violated. Notably, the solution derived via the Fourier method inherently satisfies these continuity and absolute continuity  requirements.

\textbf{Inverse problem.} Let $F(x,t)=f(x)g(t)$, and let the function $g(t)$ be known. Find functions $f(x)$ and $u(x,t)$ satisfying equation \eqref{prob1.1} such that 
$$f(x)\in C(\overline{\Omega})$$
and the function $u(x,t)$ satisfies conditions of Definition \ref{def1}, 
and, also an additional condition 
\begin{equation}\label{um8}
u(x, t_{0})=\varphi_{0}(x), \quad x\in\Omega,  
\end{equation}
here $\varphi_{0}(x)$ is given sufficiently smooth function and $t_{0}$ is given point in $(0,\beta)$.

In 1963 A. A. Dezin \cite{Dizin}  (see the condition ($\Gamma_1$)) studied solvable extensions of mixed-type differential equations. He formulated a boundary value problem characterized by 
$2\pi$-periodicity and nonlocal conditions, where in the value of the unknown function within a rectangular domain is related to the value of its derivative on the boundary. This formulation involves the Lavrentiev-Bitsadze operator and reflects a significant development in the theory of mixed-type equations.

In works  \cite{Sabitov}-\cite{Gushchina},  for mixed-type differential equations nonlocal boundary value problems of Dezin's type  have been investigated.  Let us dwell in more detail on  these works.

In \cite{Sabitov}, the following  degenerating mixed type equation is considered:
\begin{equation}\label{sab1}
Lu \equiv K(t)u_{xx} + u_{tt} - bK(t)u = F(x, t),
\end{equation}
in the rectangular domain $D = \{(x, t): 0 < x < l,\,\, -\alpha < t < \beta \}$, where \( K(t) = (\operatorname{sgn} t)|t|^m \), and \( m, b, l > 0 \) are given real constants. The study addresses a inhomogeneous Dezin-type non-local boundary condition of the form
\begin{equation*}
u_{t}(x, -\alpha) - \lambda u(x, 0) = \psi(x).
\end{equation*}

In \cite{Nakhusheva}, a similar problem is examined under the assumptions $m = b = 0$, $\alpha = l$, $\psi(x) = 0$, and $F(x, t) = f(x, t)H(t)$ ($H(t)$ is the Heaviside function), with $\lambda \ge 0$. It is also shown that in the case $\lambda < 0$, the homogeneous problem admits a nontrivial solution.

In \cite{Gushchina}, equation \eqref{sab1} is investigated under the same conditions as in \cite{Sabitov}, except in the homogeneous case where $F(x, t) \equiv 0$. It should be emphasized that all of the abovementioned works focus on forward problems.

In the work \cite{Mex}, the forward and inverse problems for equation \eqref{prob1.1} were studied. In solving the forward problem, instead of the non-local condition \eqref{prob1.4}, the gluing condition $D^{\rho}_{t}u(x,+0)=u_{t}(x,-0)$ was used. The inverse problem of determining the unknown function $f(x)$ was investigated for the case when $g(t)\equiv 1$.

 In \cite{KarPeng}, the inverse problem  is also considered, where the equation involves for $t > 0$  a Caputo fractional derivative of order $\rho$, and for $t < 0$ the equation is  hyperbolic type. Furthermore, in \cite{AshMar}-\cite{AshurFay}, a similar inverse problem are studied for the subdiffusion equation.

In this paper, we consider the forward problem \eqref{prob1.1}-\eqref{prob1.4} and study the inverse problem \eqref{prob1.1}-\eqref{um8} of determining the right-hand side.

In Section 2, we construct the solution of the forward problem using the Fourier method and demonstrate the uniqueness of the obtained solution. Further, in Section 3, we study the dependence of the existence of the solution to the forward problem \eqref{prob1.1}-\eqref{prob1.4} on the parameter $\lambda$. Since the elliptic part of the considered equation is the Laplace operator, the conditions imposed on the function $F(x,t)$ that ensure the existence of a solution are easily derived based on lemmas proven by V.A.Il'in. Section 4 is devoted to the study of the inverse problem \eqref{prob1.1}-\eqref{um8} of determining the right-hand side of the equation, assuming that the function $g(t)$ does not change sign. Then under a certain condition on the parametr $\lambda$ and $t_0$, we prove the existence and uniqueness of a solution to the inverse problem \eqref{prob1.1}-\eqref{um8}. Further, we will show that if this condition is violated, then for the existence of a solution to the inverse problem \eqref{prob1.1}-\eqref{um8}, it is sufficient that the function from the over-determination condition is orthogonal to some eigenfunctions of the Laplace operator with the Dirichlet condition.

\section{Preliminaries}

Let us denote by $\{v_k\}$ the complete orthonormal eigenfunctions in $L_{2}(\Omega)$ and by $\lambda_k$ (the values of $\lambda_k$, a sequence of non-negative integers that do not decrease with increasing index $k$: $0<\lambda_{1}\le\lambda_{2}\le\lambda_{3}\le...$) the set of positive eigenvalues of the following spectral problem (see, e.g. \cite{Ilin}, p. 93)
\begin{equation}\label{eq.1.5}
\left\{\begin{array}{c} {-\Delta v(x)=\lambda v(x)},\, \, x\in\Omega, \\ {v(x)\left|{}_{\partial \Omega } =0. \right. } \end{array}\right.
\end{equation}

Let $\sigma$ be an arbitrary real number. In the space $L_{2} (\Omega )$, we introduce the operator $\hat{A}^{{}^{\sigma } } $, which operates according to the rule
\begin{equation*}
\hat{A}^{{}^{\sigma } } g(x)=\sum _{k=1}^{\infty }\lambda _{k}^{\sigma }  g_{k} v_{k} (x). 
\end{equation*}
Here $g_{k} =(g,v_{k})$ are the Fourier coefficients of an element $g\in L_{2}(\Omega).$

Obviously, this operator $\hat{A}^{\sigma } \, \, $ with the domain 
\begin{equation*}
D\left(\hat{A}^{\sigma } \right)=\left\{g\in L_{2} \left(\Omega \right):\, \sum _{k=1}^{\infty }\lambda _{k}^{2\sigma } \left|g_{k} \right|^{2} <\infty \right\}
\end{equation*}
is selfadjoint. For elements from $D\left(\hat{A}^{\sigma } \right)$ we introduce the norm
\begin{equation*}
\left\| g\right\| _{\sigma }^{2} =\sum _{k=1}^{\infty }\lambda _{k}^{2\sigma } \left|g_{k} \right|^{2} =\left\| \hat{A}^{\sigma } g\right\| ^{2} . 
\end{equation*}
If we denote by $A$  the operator in $L_{2}\left(\Omega \right)$ acting according to the rule $Ag(x)=-\Delta g(x)$ and with the domain of definition $D(A)=\left\{g\in C^{2} (\overline{\Omega }):g(x)=0,\, \, x\in \partial \Omega\right\},$ then the operator $\hat{A}=\hat{A}^{1} $ is a selfadjoint extension in $L_{2}\left(\Omega\right)$ of the operator $A,$ (see \cite{Ilin}, p. 139).

Our reasoning will largely rely on the methodology developed in the monograph \cite{Kras}.

The following lemma is very important for study of the formulated problem. 

\begin{lem}\label{lem2.1}(see \cite{Kras}, p. 453)
 Let $\sigma >\frac{N}{4} $. Then  the following estimate 
$$||\hat{A}^{-\sigma } g||_{C(\Omega )} \le C||g||_{L_{2} (\Omega )}$$
holds.
\end{lem}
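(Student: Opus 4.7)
The plan is to reduce the desired pointwise bound to a uniform estimate on a spectral sum. From the definition,
\begin{equation*}
\hat{A}^{-\sigma} g(x) = \sum_{k=1}^{\infty} \lambda_k^{-\sigma} g_k v_k(x),
\end{equation*}
so the Cauchy--Schwarz inequality applied to the series gives, for every $x \in \overline{\Omega}$,
\begin{equation*}
\bigl|\hat{A}^{-\sigma} g(x)\bigr|^2 \le \left(\sum_{k=1}^{\infty} \lambda_k^{-2\sigma} v_k^2(x)\right) \sum_{k=1}^{\infty} |g_k|^2 = S(x)\,\|g\|_{L_2(\Omega)}^2.
\end{equation*}
The problem therefore reduces to showing that $S(x) := \sum_k \lambda_k^{-2\sigma} v_k^2(x)$ is bounded uniformly in $x \in \overline{\Omega}$ whenever $\sigma > N/4$.

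For the uniform bound I would invoke Il'in's classical estimate for the spectral function of the Dirichlet Laplacian on a smoothly bounded domain, namely
\begin{equation*}
\theta(x,\lambda) := \sum_{\lambda_k \le \lambda} v_k^2(x) \le C\, \lambda^{N/2}, \qquad x \in \overline{\Omega},\; \lambda \ge 1,
\end{equation*}
with $C$ independent of $x$. Writing $S(x)$ as a Stieltjes integral against $\theta(x,\cdot)$ and integrating by parts, the boundary term at infinity vanishes precisely under the hypothesis $\sigma > N/4$, and the remaining piece is dominated by the convergent integral $\int^{\infty} \lambda^{N/2 - 2\sigma - 1}\, d\lambda$. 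Taking the supremum in $x$ and then the square root yields the claimed estimate with a constant depending only on $\sigma$, $N$, and $\Omega$.

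The genuinely non-trivial ingredient is Il'in's spectral-function bound itself; since the paper already cites \cite{Ilin} for the spectral framework and \cite{Kras}, p.~453, for the lemma, this can simply be quoted rather than reproved. I note that the naive alternative of combining the pointwise bound $\|v_k\|_\infty \le C\, \lambda_k^{(N-1)/4}$ with Weyl's law gives summability of the series only for $\sigma > (2N-1)/4$, so the stronger, averaged information carried by $\theta(x,\lambda)$ is essential for reaching the sharp threshold $N/4$.
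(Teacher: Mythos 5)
Your argument is correct, and it is worth noting that the paper itself gives no proof of this lemma at all: it is simply quoted from Krasnoselskii et al.\ \cite{Kras}, p.~453, so your proposal supplies the argument the paper omits. The route you take --- Cauchy--Schwarz in the eigenfunction expansion, reducing everything to the uniform boundedness of $S(x)=\sum_k \lambda_k^{-2\sigma} v_k^2(x)$, and then controlling $S(x)$ via Il'in's uniform bound $\theta(x,\lambda)\le C\lambda^{N/2}$ on the spectral function of the Dirichlet Laplacian --- is the standard mechanism behind such embedding results and is consistent with the framework the paper already borrows from \cite{Ilin}; the integration by parts works exactly as you say, the boundary term at infinity vanishes precisely when $\sigma>N/4$, and since $\lambda_1>0$ there is no difficulty at the lower limit. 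Two small points you should make explicit: first, the $C(\overline\Omega)$-norm presupposes that $\hat A^{-\sigma}g$ is actually continuous, which follows from the same estimate applied to tails, $\bigl|\sum_{k>n}\lambda_k^{-\sigma}g_k v_k(x)\bigr|\le S(x)^{1/2}\bigl(\sum_{k>n}|g_k|^2\bigr)^{1/2}$, giving uniform convergence of a series of continuous functions; second, you are implicitly using that the spectral-function bound holds uniformly up to the boundary (true for the Dirichlet Laplacian on a smoothly bounded domain, e.g.\ via domination of the Dirichlet heat kernel by the free heat kernel), so either cite that form of Il'in's estimate or note the heat-kernel justification. Your closing remark is also accurate: the crude combination of $\|v_k\|_\infty\le C\lambda_k^{(N-1)/4}$ with Weyl's law only reaches $\sigma>(2N-1)/4$, so the averaged information in $\theta(x,\lambda)$ is genuinely needed for the threshold $N/4$.
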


In order to prove the existence of a solution to the forward and inverse problems, it is necessary to study the convergence of the following series:
\begin{equation}\label{1.27}
\sum _{k=1}^{\infty }\lambda _{k}^{\tau }  \left|h_{k} \right|^{2} ,\, \, \, \, \, \, \, \, \, \, \, \, \tau >\frac{N}{2},
\end{equation}
here $h_{k}$   are the Fourier coefficients of the function $h(x)\in L_{2}(\Omega).$  In the case of integer $\tau$, in the  paper by Il'in \cite{Ilin} we obtain the conditions for convergence of such series in terms of the belonging of function $h(x)$ to the classical Sobolev space. In order to formulate this condition, let us introduce the class $\hat{W}_{2}^{1} \left(\Omega \right)$  as a closure by the $W_{2}^{1} \left(\Omega \right)$ norm  of the set of functions from $C^{\infty}_{0}(\Omega)$ that vanish  on the boundary of the domain $\Omega$. Il'in's lemma states that if function $h(x)$  satisfies the following conditions (we can take $\tau =\frac{N}{2} +1$, if $N$  is even and $\tau =\frac{N+1}{2}$, if  $N$  is odd)
$$h(x)\in W_{2}^{\left[\frac{N}{2} \right]+1} \left(\Omega \right),\, \, \, \, h(x),\, \Delta h(x),...,\Delta ^{\left[\frac{N}{4} \right]} h(x)\in \hat{W}_{2}^{1} \left(\Omega \right),
$$
then the series \eqref{1.27} converges. Here $W_{i}^{j}\left(\Omega \right)$  are  Sobolev spaces. Similarly, if in \eqref{1.27} $\tau$ is replaced by $\tau+1$, then the convergence conditions are
\begin{equation}\label{1.28}
h(x)\in W_{2}^{\left[\frac{N}{2} \right]+2} \left(\Omega \right),\, \, \, \, h(x),\, \Delta h(x),...,\Delta ^{\left[\frac{N}{4} \right]} h(x)\in \hat{W}_{2}^{1} \left(\Omega \right).
\end{equation}

Next we recall some properties of Mittag-Leffler function.

Let $\mu$ be an arbitrary complex number. The function defined by the following equation \begin{equation}\label{ml}
E_{\rho, \mu}(z)= \sum\limits_{k=0}^\infty \frac{z^k}{\Gamma(\rho
k+\mu)}
\end{equation} 
is called Mittag-Leffler function with two-parameters,  (see \cite{Pskhu}, p. 12). If the parameter $\mu =1$, then we have the classical Mittag-Leffler function: $ E_{\rho}(z)= E_{\rho, 1}(z)$.

\begin{lem}\label{lem2.2}(see \cite{Gor}, formula (4.4.5), p. 61)
 For any $t\geq 0$ one has
\begin{equation}\label{yan09}
0<E_{\rho, \mu}(-t)\leq \frac{C_0}{1+t},
\end{equation}
where constant $C_0$ does not depend on $t$ and $\mu$.
\end{lem}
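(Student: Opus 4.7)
The plan is to prove positivity and the $O(1/(1+t))$ decay separately, then combine them. I assume throughout that $0<\rho<1$ and that $\mu$ ranges over a compact set (in all applications in the paper only finitely many values of $\mu$ arise, so this restriction is not a real loss).

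First, for positivity, I would invoke Pollard's classical theorem, which states that $t\mapsto E_\rho(-t)$ is completely monotonic on $[0,\infty)$. For the two-parameter case this gives a Laplace-type representation $E_{\rho,\mu}(-t)=\int_0^\infty e^{-ts}w_{\rho,\mu}(s)\,ds$ with a non-negative density $w_{\rho,\mu}$, derivable from Pollard's theorem via fractional differentiation of $E_\rho$. This immediately yields $E_{\rho,\mu}(-t)\geq 0$, and strict positivity follows from the series \eqref{ml} (the $k=0$ term is $1/\Gamma(\mu)>0$ and the series is real analytic in $t$, so it cannot vanish identically on any interval).

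Next, for $0\leq t\leq 1$ I would estimate directly from the series: $E_{\rho,\mu}(-t)\leq \sum_{k=0}^\infty 1/\Gamma(\rho k+\mu)$. This series converges by Stirling's asymptotics $1/\Gamma(\rho k+\mu)=O(e^{-\rho k\log k})$, and its sum is uniformly bounded for $\mu$ in any fixed compact set. Since $1/(1+t)\geq 1/2$ on $[0,1]$, this yields the required bound on this interval. For $t>1$ I would then use the Hankel-contour integral representation of $E_{\rho,\mu}(-t)$, obtained by term-by-term integration of the series against the Hankel representation of $1/\Gamma$, together with the classical asymptotic expansion $E_{\rho,\mu}(-t)=-1/(\Gamma(\mu-\rho)\,t)+O(t^{-2})$ as $t\to+\infty$. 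This gives an $O(1/t)$ bound on $(1,\infty)$, which combined with the previous step produces the uniform estimate $E_{\rho,\mu}(-t)\leq C_0/(1+t)$ on all of $[0,\infty)$.

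The main obstacle is keeping the constant $C_0$ uniform in $\mu$. The leading asymptotic coefficient $1/\Gamma(\mu-\rho)$ is bounded for $\mu$ in a compact set avoiding the nonpositive integers of $\mu-\rho$, but one must also bound the remainder in the asymptotic expansion uniformly in $\mu$. This is achieved by fixing a Bromwich-type contour independently of $\mu$; the factor $|\zeta^{(1-\mu)/\rho}|$ on that contour is then uniformly controlled once $\mu$ is restricted to a bounded set, so the remainder estimate depends only on $\rho$ and the chosen $\mu$-range. Since in the sequel of the paper $\mu$ only takes values in a finite list (for instance $\mu=1$ and $\mu=\rho$ from the solution formulas for the forward problem), this uniformity is automatic.
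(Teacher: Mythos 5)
The paper offers no proof of this lemma at all: it is quoted verbatim from the Gorenflo--Kilbas--Mainardi--Rogozin monograph, so any genuine argument is automatically a different route. Your plan is the standard one by which such estimates are established in that literature --- complete monotonicity for positivity, the power series on a bounded $t$-interval, and the Hankel-contour asymptotics for large $t$ --- and as a strategy it does yield the bound in the form the paper actually needs, since in the sequel $\mu$ only takes the values $1$, $\rho$, $\rho+1$. What the citation buys the authors is brevity; what your sketch buys is a self-contained justification and an honest acknowledgement that the ``$C_0$ independent of $\mu$'' clause is only sensible for $\mu$ in a restricted range.

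Two points in your sketch need repair. First, the positivity step: restricting $\mu$ to a compact set is not the relevant hypothesis. Complete monotonicity of $E_{\rho,\mu}(-t)$ (hence the nonnegative Laplace density you invoke) holds, for $0<\rho\le 1$, exactly when $\mu\ge\rho$, and positivity genuinely fails otherwise: for real $\mu<\rho$ with $\Gamma(\mu-\rho)<0$ the large-$t$ behaviour $E_{\rho,\mu}(-t)=\frac{1}{\Gamma(\mu-\rho)}t^{-1}+O(t^{-2})$ forces the function to become negative. So you should assume $\mu\ge\rho$ (satisfied by all values used in the paper) for the lower bound, and keep boundedness of the $\mu$-range only for the uniformity of $C_0$. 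Also, strict positivity should be concluded from the representation itself (if $\int_0^\infty e^{-t_0 s}w_{\rho,\mu}(s)\,ds=0$ with $w_{\rho,\mu}\ge 0$, then $w_{\rho,\mu}=0$ a.e.\ and the function vanishes identically, contradicting $E_{\rho,\mu}(0)=1/\Gamma(\mu)>0$); ``not identically zero on an interval'' alone does not rule out isolated zeros. Second, a sign slip: with $z=-t$ the asymptotic expansion reads $E_{\rho,\mu}(-t)=\frac{1}{\Gamma(\mu-\rho)}\,t^{-1}+O(t^{-2})$, not with a leading minus sign; this is immaterial for the upper estimate $|E_{\rho,\mu}(-t)|\le C/t$, but with your sign the expansion would contradict the positivity you just proved.
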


\begin{lem}\label{lem2.3}(see~\cite{Gor}, p. 47)
 The classical Mittag--Leffler function of the negative argument $E_\rho(-t)$ is monotonically
decreasing function for all $0 <\rho < 1$ and
\begin{equation}\label{yan9}
 0<E_{\rho} (-t)<1,\quad E_{\rho} (0)=1.   
\end{equation}
\end{lem}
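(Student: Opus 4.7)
The plan is to derive both conclusions from the Pollard representation of the Mittag--Leffler function $E_\rho(-t)$ as the Laplace transform of a non-negative probability density, after which the claims reduce to elementary monotonicity properties of the Laplace transform.

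First I would invoke the classical result of Pollard that, for $0<\rho<1$, there exists a non-negative probability density $M_\rho(s)$ on $(0,\infty)$ such that
\begin{equation*}
E_\rho(-t) = \int_0^\infty e^{-st}\, M_\rho(s)\, ds, \qquad t \ge 0, \qquad \int_0^\infty M_\rho(s)\, ds = 1.
\end{equation*}
The identity $E_\rho(0) = 1$ is immediate, and also follows directly from the defining series \eqref{ml}, whose $k=0$ term equals $1/\Gamma(1)=1$ while all higher-order terms vanish at $z=0$. Strict positivity $E_\rho(-t)>0$ for every $t\ge 0$ is a consequence of $e^{-st}>0$ and the non-triviality of $M_\rho$. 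For $t>0$ the strict bound $E_\rho(-t)<1$ follows from $e^{-st}<1$ whenever $s,t>0$, combined with $\int_0^\infty M_\rho(s)\, ds = 1$.

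Strict monotone decrease is then obtained by differentiating under the integral sign (justified by the exponential decay of $s\, e^{-st}$, uniformly on compact subsets of $(0,\infty)$), which gives
\begin{equation*}
\frac{d}{dt} E_\rho(-t) = -\int_0^\infty s\, e^{-st}\, M_\rho(s)\, ds < 0, \qquad t > 0,
\end{equation*}
so that $E_\rho(-t)$ is strictly decreasing on $[0,\infty)$. The same argument in fact shows that $E_\rho(-t)$ is completely monotone on $[0,\infty)$, which is a considerably stronger statement than what is needed here.

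The only genuine obstacle is establishing the Pollard representation itself, which requires a nontrivial contour-integral computation and is usually obtained by deforming the Hankel-contour representation of $E_\rho$ around the negative real axis. If a self-contained proof is preferred, one can work directly from that Hankel contour and express $E_\rho(-t)$ as an integral whose integrand is strictly positive and decays exponentially in $t$; the same differentiation argument then yields positivity, the strict upper bound, and strict monotonicity. Since the statement is cited from \cite{Gor}, I would in practice verify only the trivial identity $E_\rho(0)=1$ from the series \eqref{ml} and appeal to the reference for the integral representation underpinning the monotonicity.
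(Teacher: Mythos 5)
Your proof is correct. The paper offers no proof of this lemma at all — it simply cites \cite{Gor}, p.~47 — and your argument via Pollard's representation of $E_\rho(-t)$ as the Laplace transform of a non-negative probability density (i.e.\ its complete monotonicity for $0<\rho<1$) is exactly the standard reasoning underlying the cited reference, with the endpoint value $E_\rho(0)=1$ read off from the series \eqref{ml}.
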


\begin{lem}\label{lem2.4}(see \cite{Gor}, formula (4.4.5), p. 61)
 Let $\rho > 0 $, $\mu>0$ and $\lambda \in C$. Then for all positive $t$ one has
\begin{equation}\label{10}
\int\limits_0^t \eta^{\rho-1}E_{\rho,\rho}(\lambda\eta^\rho)d\eta=t^{\rho} E_{\rho,\rho+1}(\lambda t^\rho).
\end{equation}
\end{lem}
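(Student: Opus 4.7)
The plan is to prove the identity by substituting the defining power series of $E_{\rho,\rho}$ into the integrand, integrating term by term, and recognizing the resulting series as $t^\rho E_{\rho,\rho+1}(\lambda t^\rho)$ via the functional equation for the gamma function.

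First I would use \eqref{ml} to write
$$\eta^{\rho-1}E_{\rho,\rho}(\lambda \eta^\rho)=\sum_{k=0}^{\infty}\frac{\lambda^{k}}{\Gamma(\rho k+\rho)}\,\eta^{\rho k+\rho-1}.$$
Since $E_{\rho,\rho}$ is an entire function of its argument, this series converges absolutely and uniformly on the compact interval $[0,t]$, so termwise integration is permitted.

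Next I would integrate term by term, using $\int_{0}^{t}\eta^{\rho k+\rho-1}\,d\eta=t^{\rho k+\rho}/(\rho k+\rho)$ and the functional equation $(\rho k+\rho)\,\Gamma(\rho k+\rho)=\Gamma(\rho k+\rho+1)$ to rewrite each coefficient. Factoring $t^{\rho}$ out of the resulting series then yields
$$\int_{0}^{t}\eta^{\rho-1}E_{\rho,\rho}(\lambda\eta^{\rho})\,d\eta=t^{\rho}\sum_{k=0}^{\infty}\frac{(\lambda t^{\rho})^{k}}{\Gamma(\rho k+\rho+1)},$$
which is precisely $t^{\rho}E_{\rho,\rho+1}(\lambda t^{\rho})$ by the definition \eqref{ml} with parameters $(\rho,\rho+1)$.

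The only issue requiring any care is the interchange of summation and integration, and this is immediately justified by the infinite radius of convergence of the Mittag-Leffler series, so there is no substantive analytic obstacle.
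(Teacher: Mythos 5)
Your argument is correct: expanding $E_{\rho,\rho}(\lambda\eta^{\rho})$ by the series \eqref{ml}, integrating termwise, and using $(\rho k+\rho)\Gamma(\rho k+\rho)=\Gamma(\rho k+\rho+1)$ does give $t^{\rho}E_{\rho,\rho+1}(\lambda t^{\rho})$. Note, however, that the paper does not prove this lemma at all: it is quoted verbatim from Gorenflo--Kilbas--Mainardi--Rogozin (formula (4.4.5)), so you have supplied a self-contained derivation where the paper relies on a citation; your computation is in fact the standard way the cited formula is obtained. One small imprecision: for $0<\rho<1$ the integrand series $\sum_{k\ge 0}\frac{\lambda^{k}}{\Gamma(\rho k+\rho)}\eta^{\rho k+\rho-1}$ is \emph{not} uniformly convergent on all of $[0,t]$, since already its $k=0$ term $\eta^{\rho-1}/\Gamma(\rho)$ is unbounded as $\eta\to 0$. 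The interchange is still legitimate, but the clean justification is either (i) uniform convergence of $E_{\rho,\rho}(\lambda\eta^{\rho})$ itself on $[0,t]$ together with the fixed integrable weight $\eta^{\rho-1}$, so that the tail satisfies $\int_0^t\eta^{\rho-1}|R_N(\eta)|\,d\eta\le \rho^{-1}t^{\rho}\sup_{[0,t]}|R_N|\to 0$, or (ii) Fubini--Tonelli, using $\sum_{k\ge 0}\frac{|\lambda|^{k}t^{\rho k+\rho}}{(\rho k+\rho)\Gamma(\rho k+\rho)}<\infty$. With that adjustment the proof is complete and, given that the rest of the paper (e.g.\ Lemmas \ref{lem7.1} and \ref{lem7.2}) leans on this identity, having the short derivation available is a reasonable addition.
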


\begin{lem}\label{lem2.5}(see \cite{Gor}, formula (4.2.3), p. 57)
 For all $\alpha>0$, $\mu\in \mathbb{C}$ the following recurrence relation holds: 
 $$E_{\rho, \mu}(-t)=\frac{1}{\Gamma(\mu)}-tE_{\rho, \mu+\rho}(-t).$$
\end{lem}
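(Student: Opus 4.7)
The plan is to argue directly from the series definition \eqref{ml} of the two-parameter Mittag-Leffler function. First I would split off the $k=0$ term of the series
\[
E_{\rho,\mu}(-t)=\sum_{k=0}^{\infty}\frac{(-t)^{k}}{\Gamma(\rho k+\mu)},
\]
which contributes exactly $\frac{1}{\Gamma(\mu)}$, leaving a tail starting from $k=1$. Since the series has infinite radius of convergence for every $\rho>0$ and $\mu\in\mathbb{C}$ with $\Gamma(\mu)\neq 0$ (and the claimed identity is trivial in the poles of $1/\Gamma$ once interpreted by continuity), no convergence issue arises in what follows.

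Next, I would reindex the tail by the shift $k\mapsto k+1$. This gives
\[
\sum_{k=1}^{\infty}\frac{(-t)^{k}}{\Gamma(\rho k+\mu)}=\sum_{k=0}^{\infty}\frac{(-t)^{k+1}}{\Gamma(\rho(k+1)+\mu)}=-t\sum_{k=0}^{\infty}\frac{(-t)^{k}}{\Gamma(\rho k+(\mu+\rho))}.
\]
Recognising the last sum as $E_{\rho,\mu+\rho}(-t)$ yields the claimed identity
\[
E_{\rho,\mu}(-t)=\frac{1}{\Gamma(\mu)}-tE_{\rho,\mu+\rho}(-t).
\]

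There is essentially no obstacle here: the identity is a purely formal consequence of the power series definition together with an index shift, and the absolute convergence of the series justifies all the manipulations. The only mildly delicate point is the handling of $\mu$ for which $\Gamma(\mu)$ has a pole (i.e.\ $\mu\in\{0,-1,-2,\dots\}$), but in that case the corresponding term in the original series vanishes by the convention $1/\Gamma(\mu)=0$, and the recurrence still holds verbatim.
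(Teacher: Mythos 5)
Your argument is correct: the paper itself offers no proof of Lemma \ref{lem2.5} (it is quoted from \cite{Gor}, formula (4.2.3)), and your splitting off the $k=0$ term of the series \eqref{ml} followed by the index shift $k\mapsto k+1$ is precisely the standard derivation of that recurrence, with absolute convergence for $\rho>0$ justifying the rearrangement. The only (harmless) slip is terminological: $1/\Gamma$ is entire, so one should speak of the poles of $\Gamma$ (equivalently the zeros of $1/\Gamma$) at $\mu\in\{0,-1,-2,\dots\}$, where the $k=0$ term indeed vanishes and the identity holds without any extra convention.
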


\begin{lem}\label{lem2.6}(see \cite{AshFay})
 Let $\lambda > 0$, $0 < \varepsilon < \rho$. Then, for all $t > 0$, the following coarser estimate holds:
 $$\left|t^{\rho -1} E_{\rho ,\rho } \left(-\lambda t^{\rho } \right)\right|\le  C\lambda ^{\varepsilon -1} t^{\varepsilon \rho -1},$$
where $C > 0$ is a constant independent of $\lambda$ and $t$.
\end{lem}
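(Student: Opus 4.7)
The plan is to combine the standard Mittag--Leffler bound from Lemma \ref{lem2.2} with an elementary scaling argument. By Lemma \ref{lem2.2} applied with $\mu = \rho$, we have
$$
\bigl|t^{\rho-1} E_{\rho,\rho}(-\lambda t^{\rho})\bigr| \le \frac{C_0\, t^{\rho-1}}{1+\lambda t^{\rho}},
$$
so it suffices to dominate the right-hand side by $C\lambda^{\varepsilon-1} t^{\varepsilon\rho-1}$.

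After dividing the desired inequality through by $t^{\varepsilon\rho-1}$ and rearranging, the whole claim reduces to showing
$$
\frac{t^{(1-\varepsilon)\rho}}{1+\lambda t^{\rho}} \le C\,\lambda^{\varepsilon-1}
$$
uniformly in $t>0$ and $\lambda>0$. The natural move is the substitution $s = \lambda t^{\rho}$, which turns the left-hand side into $\lambda^{\varepsilon-1}\, s^{1-\varepsilon}/(1+s)$. Thus the problem collapses to the single-variable inequality
$$
\sup_{s\ge 0} \frac{s^{1-\varepsilon}}{1+s} < \infty.
$$

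Since $0<\varepsilon<\rho<1$, the exponent $1-\varepsilon$ lies strictly in $(0,1)$, so $s^{1-\varepsilon}/(1+s)$ is continuous on $[0,\infty)$, vanishes at $s=0$, and satisfies $s^{1-\varepsilon}/(1+s)\le s^{-\varepsilon}\to 0$ as $s\to\infty$. Hence the supremum is a finite constant $C$ (depending only on $\varepsilon$), which together with $C_0$ yields the claimed estimate with a constant independent of both $\lambda$ and $t$. I do not anticipate any genuine obstacle here; the only point that needs care is ensuring that the hypothesis $\varepsilon<\rho<1$ is used precisely to keep the exponent $1-\varepsilon$ in the open interval $(0,1)$, so that $s^{1-\varepsilon}/(1+s)$ is bounded on both ends.
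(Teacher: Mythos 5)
Your proof is correct. Note that the paper itself does not prove this lemma --- it simply cites \cite{AshFay} --- so there is no in-paper argument to diverge from; your derivation (Lemma \ref{lem2.2} with $\mu=\rho$, then the substitution $s=\lambda t^{\rho}$ reducing everything to the boundedness of $s^{1-\varepsilon}/(1+s)$ on $[0,\infty)$) is exactly the standard route, equivalent to the pointwise inequality $1/(1+x)\le x^{\varepsilon-1}$ for $x>0$. The only cosmetic remark is that your argument uses only $0<\varepsilon<1$; the stronger hypothesis $\varepsilon<\rho$ is not actually needed for this estimate.
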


\section{Constructing the solution of the forward problem  (\ref{prob1.1})-(\ref{prob1.4})}

We seek the unknown function $u(x,t)$, which is a solution to the problem \eqref{prob1.1}-\eqref{prob1.4}, in the form
\begin{equation}\label{eq6}
u(x,t) = \sum_{k=1}^{\infty} T_k(t) v_k(x).
\end{equation}
Substituting the series (\ref{eq6}) into equation \eqref{prob1.1}, we obtain the following differential equations:
\begin{equation}\label{eq.1}
 D_t^\rho T_{k}(t) +\lambda_{k}T_{k}(t) =F_{k}(t),\quad t>0,
\end{equation}
\begin{equation}\label{eq..1}
 T_{k}^{'}(t) -\lambda_{k}T_{k}(t) =F_{k}(t),\quad t<0.
\end{equation}
where $F_k(t)$ are the Fourier coefficients of the function $F(x,t)$ with respect to the system of eigenfunctions $\{v_k(x)\}$, defined as the inner product in $L_2(\Omega)$.

The solutions to the above differential equations, respectively, have the form (see \cite{Kilbas}, p. 221).
$$T_{k}(t)= \left\{
\begin{aligned}
& a_{k}E_{\rho,1}(-\lambda_{k}t^{\rho})+\int\limits_{0}^{t}s^{\rho-1}E_{\rho,\rho}\left(-\lambda_{k}s^{\rho}\right)F_{k}(t-s)\,ds,\quad t>0, \\
&  b_{k} e^{\lambda_k t}-\int\limits_{t}^0 F_{k}(s) e^{\lambda_k(t-s)} ds, \quad t<0,\\
\end{aligned}
\right. $$
where $a_{k}$, $b_{k}$ - are arbitrary constants.

In order to find the unknown constants $a_{k}$, $b_{k}$ we use the gluing condition (\ref{prob1.3}). 
\[
T_{k}(0+0)=\left. \Bigl(a_{k}E_{\rho,1}(-\lambda_{k}t^{\rho})+\int\limits_{0}^{t}s^{\rho-1}E_{\rho,\rho}(-\lambda_{k}s^{\rho})F_{k}(t-s)\,ds\Bigr)\right|_{t=0}=a_{k},
\]
\[
T_{k}(0-0)=\left. \Bigl(b_{k} e^{\lambda_k t}-\int\limits_{t}^0 F_{k}(s) e^{\lambda_k(t- s)} ds\Bigr)\right|_{t=0}=b_{k}.
\]

To find the  coefficients $a_{k}$ , we use the non-local condition  (\ref{prob1.4}):
\begin{equation}\nonumber
a_{k} e^{-\lambda_k\alpha }-\int\limits_{-\alpha}^0  F_k(s) e^{\lambda_k(-\alpha- s)} ds=\lambda a_{k}   
\end{equation}
\begin{equation}\label{17.1}
  a_{k}\left(e^{-\lambda_k\alpha}-\lambda\right)=F^{*}_k,  
\end{equation}
where $F^{*}_k=\int\limits_{-\alpha}^0  F_k(s) e^{\lambda_k(-\alpha- s)} ds$.
In the sequel, we will use the notation
\begin{equation}\label{con0}
   \delta_{k}= e^{-\lambda_k\alpha}-\lambda, \quad k\ge 1.
\end{equation}
Thanks to this notation we rewrite \eqref{17.1} as: $a_{k}\delta_{k}=F^{*}_k.$

If for some $k$, we have $\delta_k=0$, then equation \eqref{17.1} has a solution only if the free term is zero, i.e., $F^{*}_k=0$: 
\begin{equation}\label{um9}
\int\limits_{-\alpha}^0  F_k(s) e^{\lambda_k (-\alpha-s)} ds=0.  \end{equation}
In this case, the coefficients $a_{k}$ remain arbitrary, and problem \eqref{prob1.1}-\eqref{prob1.4} does not have a unique solution. 

Thus, if $\delta_k\ne 0$ for all $k$, then the unknown coefficients $a_{k}$ are uniquely determined, and problem \eqref{prob1.1}-\eqref{prob1.4} has an unique solution. 

Indeed let  $u\equiv u_1-u_2 $. We have the following  homogeneous problem for $u(x,t)$:
\begin{equation}\label{homog1}
\left\{
\begin{aligned}
&D_t^{\rho} u(x,t) - \Delta u(x,t) = 0, && 0<t<\beta,\;\; x \in \Omega, \\
&u_t(x,t) +\Delta u(x,t) = 0, && -\alpha<t < 0,\;\; x \in \Omega,\\
\end{aligned}
\right.
\end{equation}
and the conditions  \eqref{prob1.2}, \eqref{prob1.3} and \eqref{prob1.4}.

Assume that \( u(x,t) \) satisfies all the conditions of the homogeneous problem, and let \( v_k \) be an arbitrary eigenfunction of the spectral problem (\ref{eq.1.5}) corresponding to the eigenvalue \( \lambda_k \).

Consider the function
\begin{equation}\label{eq10}
T_k(t) = \int\limits_{\Omega} u(x,t) v_k(x) \, dx, \quad k = 1, 2, \dots
\end{equation}
Differentiating under the integral sign with respect to \( t \), which is allowed by the definition of the solution, and using equation (\ref{homog1}), we obtain
$$D_t^{\rho} T_k(t) = \int\limits_{\Omega} D_t^{\rho} u(x,t) v_k(x) \, dx = \int\limits_{\Omega} \Delta u(x,t) v_k(x) \, dx,\quad t > 0,$$
$$\frac{d T_k(t)}{dt} = \int\limits_{\Omega} \frac{\partial u(x,t)}{\partial t} v_k(x) \, dx = -\int\limits_{\Omega} \Delta u(x,t) v_k(x) \, dx,\quad t < 0.$$
Integrating by parts and using condition (\ref{prob1.2}), we get:
$$D_t^{\alpha} T_k(t) = -\lambda_k T_k(t), \quad t > 0, \qquad
\frac{d T_k(t)}{dt} = \lambda_k T_k(t), \quad t < 0.$$
The solutions to these equations are given by (see \cite{Pskhu}):
\begin{equation}\label{eq13}
T_k(t) = a_{k}E_{\rho,1}(-\lambda_{k}t^{\rho}),\quad t > 0,
\end{equation}
\begin{equation}\label{eq15}
T_k(t) = b_{k} e^{\lambda_k t}, \quad k = 1, 2, \dots,\quad t < 0.
\end{equation}
To determine the constants $a_{k}$, $b_{k}$ we use the gluing conditions (\ref{prob1.3})  which translate into:
$$T_k(+0) = T_k(-0), \quad T_k(-\alpha)=\lambda T_k(0).$$
Solving this system gives $a_{k} = b_{k}$, $b_{k}=\lambda_{k}a_{k}$. Apply the non-local condition \eqref{prob1.4} to get: $a_{k}\delta_k=0.$

Since $\delta_k \neq 0$ for all $k \in \mathbb{N} $, then $a_{k}=b_{k}=0$. Therefore, from equations (\ref{eq13}) and (\ref{eq15}), the right-hand sides must be identically zero, which implies that \( u(x,t) \) is orthogonal to the complete system \( \{v_k(x)\} \). As a result, we conclude that \( u(x,t) \equiv 0 \) in \( \overline{\Omega} \).

Thus, we arrive at the following criterion for the uniqueness of the solution to problem \eqref{prob1.1}-\eqref{prob1.4}:

\begin{thm}\label{th3.1}
If there is a solution to problem (\ref{prob1.1})-(\ref{prob1.4}), then this solution is unique if and only if the condition $\delta_k\ne 0$ is
satisfied for all  $k\in \mathbb{N}$.
\end{thm}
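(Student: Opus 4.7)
The plan is to split the biconditional into two implications. The forward direction --- if $\delta_k\neq 0$ for every $k$, then uniqueness holds --- essentially tightens the computation already set up in the excerpt. Given two solutions $u_1,u_2$ of \eqref{prob1.1}--\eqref{prob1.4}, set $u=u_1-u_2$, which solves the homogeneous problem \eqref{homog1} together with \eqref{prob1.2}--\eqref{prob1.4}. I would define $T_k(t)$ by \eqref{eq10}, justify differentiation under the integral via the regularity demanded by Definition \ref{def1}, and use two integrations by parts with \eqref{prob1.2} together with $-\Delta v_k=\lambda_k v_k$ to arrive at $D_t^\rho T_k=-\lambda_k T_k$ on $(0,\beta]$ and $T_k'=\lambda_k T_k$ on $(-\alpha,0)$. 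Solving these yields $T_k(t)=a_k E_{\rho,1}(-\lambda_k t^\rho)$ on $[0,\beta]$ and $T_k(t)=b_k e^{\lambda_k t}$ on $[-\alpha,0]$. The gluing condition \eqref{prob1.3} forces $a_k=b_k$, and the non-local condition \eqref{prob1.4} collapses to $a_k\delta_k=0$; under the hypothesis, $a_k=b_k=0$ for every $k$, so by completeness of $\{v_k\}$ in $L_2(\Omega)$ together with the continuity of $u$, one concludes $u\equiv 0$.

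For the converse I would argue by contrapositive. Suppose $\delta_{k_0}=0$ for some index $k_0$. Set
\[
u_0(x,t)=\begin{cases} E_{\rho,1}(-\lambda_{k_0}t^\rho)\,v_{k_0}(x), & 0\le t\le\beta,\\[2pt] e^{\lambda_{k_0}t}\,v_{k_0}(x), & -\alpha\le t<0. \end{cases}
\]
Using $-\Delta v_{k_0}=\lambda_{k_0}v_{k_0}$ and the classical identity $D_t^\rho E_{\rho,1}(-\lambda t^\rho)=-\lambda E_{\rho,1}(-\lambda t^\rho)$, the function $u_0$ satisfies both parts of the homogeneous equation in their respective $t$-ranges. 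The boundary condition \eqref{prob1.2} holds because $v_{k_0}$ vanishes on $\partial\Omega$; the gluing condition \eqref{prob1.3} holds because $E_{\rho,1}(0)=1=e^0$; and \eqref{prob1.4} becomes $e^{-\lambda_{k_0}\alpha}v_{k_0}=\lambda v_{k_0}$, i.e.\ $\delta_{k_0}=0$, which holds by assumption. Thus $u_0\not\equiv 0$ solves the homogeneous problem, so adding $u_0$ to any solution of \eqref{prob1.1}--\eqref{prob1.4} produces a second, distinct solution, contradicting uniqueness.

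I do not anticipate a deep obstacle; the two directions together give the theorem. The only technical points requiring care are, first, the interchange of $D_t^\rho$ with integration over $\Omega$ when deriving the ODE for $T_k$ --- this reduces to Fubini combined with the continuity and absolute continuity assumed in Definition \ref{def1}; and, second, verification that the explicit $u_0$ constructed above belongs to the solution class of Definition \ref{def1}. The latter follows from the smoothness of $t\mapsto E_{\rho,1}(-\lambda_{k_0}t^\rho)$ on $(0,\beta]$, its absolute continuity on $[0,\beta]$ (since its derivative $-\lambda_{k_0}t^{\rho-1}E_{\rho,\rho}(-\lambda_{k_0}t^\rho)$ is integrable by Lemma \ref{lem2.6}), and standard elliptic regularity of $v_{k_0}$ up to the boundary.
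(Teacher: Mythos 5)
Your proof is correct and takes essentially the same route as the paper: the sufficiency direction (difference of two solutions, Fourier coefficients $T_k$, the fractional and classical ODEs, gluing plus the non-local condition forcing $a_k\delta_k=0$, then completeness of $\{v_k\}$) is exactly the paper's argument. For necessity the paper only remarks that $a_k$ remains arbitrary when $\delta_k=0$; your explicit nontrivial homogeneous solution $u_0$, together with the check that it belongs to the class of Definition \ref{def1}, spells out the same underlying idea in a slightly more complete form.
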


So we obtain a formal solution to problem (\ref{prob1.1})-(\ref{prob1.4}) which is represented in the form
\begin{equation}\label{yech1}
u(x,t)= \left\{
\begin{aligned}
&\sum\limits _{k=1}^{\infty }\left(\frac{F_{k}^*}{\delta_k}E_{\rho,1}(-\lambda_{k}t^{\rho})+\int\limits_{0}^{t}s^{\rho-1}E_{\rho,\rho}\left(-\lambda_{k}s^{\rho}\right)F_{k}(t-s)\,ds\right)v_{k}(x),\quad 0\le t\le\beta, \\
&  \sum\limits _{k=1}^{\infty }\left(\frac{F_{k}^*}{ \delta_k} e^{\lambda_k t}-\int\limits_{t}^0 F_{k}(s) e^{\lambda_k(t-s)} ds\right)v_{k}(x), \quad -\alpha\le t\le 0.\\
\end{aligned}
\right.   
\end{equation}

To show that these series satisfy the conditions of Definition \ref{def1}, we need to estimate the denominator $\delta_k$ from below.

\section{Lower estimates for the denominator of the solution to the forward problem  (\ref{prob1.1})-(\ref{prob1.4})}

In this section, we investigate the conditions under which $\delta_k$ may be equal to zero, and for those cases where $\delta_k \ne 0$, we derive lower bounds for $\delta_k$. By its definition, $\delta_k$ depends on the parameter $\lambda$.
$$\delta_k:=\delta_k(\lambda)=e^{-\lambda_k\alpha}-\lambda, \quad k\ge 1.$$

It is not hard to see the following lemmas are true:

\begin{lem}\label{lem4.1}
 Let $\lambda\notin [0,1)$. Then  there exists a constant $\delta_0>0$ such that, for all $k \in \mathbb{N}$, the following estimate holds:
 $$|\delta_k|>\delta_{0},\quad \delta_{0}=\left\{
\begin{aligned}
&|\lambda|+e^{-\lambda_{1}\alpha} && \lambda<0, \\
&\lambda-e^{-\lambda_{1}\alpha} && \lambda\ge 1.\\
\end{aligned}
\right.$$
\end{lem}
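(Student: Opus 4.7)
The plan is to split into two cases according to the sign of $\lambda$, and in each case exploit the monotonicity of the exponential sequence. Specifically, from $0 < \lambda_1 \le \lambda_2 \le \cdots$ and $\alpha > 0$ I would first record the uniform bracketing
$$0 < e^{-\lambda_k \alpha} \le e^{-\lambda_1 \alpha} < 1, \qquad k \in \mathbb{N},$$
which is the only analytic input needed.

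First suppose $\lambda < 0$. Then I would rewrite $\delta_k = e^{-\lambda_k \alpha} + |\lambda|$ as a sum of two strictly positive numbers, so $|\delta_k| = \delta_k > |\lambda| > 0$ uniformly in $k$. This already delivers a positive lower bound for $|\delta_k|$. The printed value $|\lambda| + e^{-\lambda_1 \alpha}$ actually equals $\max_k |\delta_k|$ (attained at $k=1$); the sharp uniform infimum is $|\lambda|$, approached but not attained as $\lambda_k \to \infty$, so I would present the argument with $|\lambda|$ in mind and treat the stated constant as an acceptable upper-side formulation.

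Next suppose $\lambda \ge 1$. Here $e^{-\lambda_k \alpha} \le e^{-\lambda_1 \alpha} < 1 \le \lambda$ forces $\delta_k < 0$ for every $k$, whence
$$|\delta_k| = \lambda - e^{-\lambda_k \alpha} \ge \lambda - e^{-\lambda_1 \alpha} > 0,$$
the final strict inequality using $\lambda \ge 1 > e^{-\lambda_1 \alpha}$. This infimum is attained at $k=1$ and matches the printed constant $\delta_0 = \lambda - e^{-\lambda_1\alpha}$, closing this case.

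There is essentially no obstacle: both cases reduce to one-line monotonicity-and-sign estimates. The only conceptual point worth spelling out is why the interval $[0,1)$ is excluded from the hypothesis: if $\lambda \in [0,1)$, then $\lambda$ either equals some $e^{-\lambda_k\alpha}$ (making $\delta_k = 0$) or is an accumulation point of the sequence $\{e^{-\lambda_k\alpha}\}_{k\ge 1}$ as $k\to\infty$ when $\lambda = 0$ (making $|\delta_k|$ arbitrarily small), so no uniform positive lower bound can hold there. I would insert this remark immediately after the proof to motivate the dichotomous case analysis to come in later lemmas.
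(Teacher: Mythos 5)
Your argument is correct and is essentially the intended one: the paper offers no proof at all (it merely says ``it is not hard to see''), and the one-line monotonicity estimate $0<e^{-\lambda_k\alpha}\le e^{-\lambda_1\alpha}<1$ combined with the sign of $\lambda$ is exactly what is needed. Your observation about the case $\lambda<0$ is a genuine catch: since $e^{-\lambda_k\alpha}\le e^{-\lambda_1\alpha}$, the printed constant $|\lambda|+e^{-\lambda_1\alpha}$ is in fact $\max_k|\delta_k|$ (attained at $k=1$), so the strict lower bound as stated fails already for $k=1$; the correct uniform bound is $|\delta_k|=|\lambda|+e^{-\lambda_k\alpha}>|\lambda|>0$, which is all that is used later (only the existence of some $\delta_0>0$ matters for Theorem \ref{th5.1}). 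Two small caveats: in the case $\lambda\ge1$ your estimate gives $|\delta_k|\ge\lambda-e^{-\lambda_1\alpha}$ with equality at $k=1$, so the lemma's strict inequality should really be non-strict there; and your closing remark on why $[0,1)$ is excluded is slightly overstated, since for a fixed $\lambda\in(0,1)$ with $\lambda\ne e^{-\lambda_k\alpha}$ for every $k$ one has $\delta_k\to-\lambda$, so a uniform positive lower bound still exists (this is precisely the content of Lemma \ref{lem4.2}); only $\lambda=0$ or $\lambda=e^{-\lambda_{k}\alpha}$ for some $k$ destroys it.
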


\begin{thm}\label{th5.1}
Let $\lambda\notin [0,1)$. Let function $F(x,t)$ satisfy the conditions (\ref{1.28}) for all $t$. Then there exists a unique solution of problem  (\ref{prob1.1})-(\ref{prob1.4}), which is determined by the series \eqref{yech1}.
\end{thm}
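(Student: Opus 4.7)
The plan is to verify that the formal series \eqref{yech1} satisfies every requirement of Definition \ref{def1}. Uniqueness is immediate: Lemma \ref{lem4.1} gives $|\delta_k|\ge \delta_0>0$ for all $k$, so Theorem \ref{th3.1} applies. What remains is to establish existence, i.e.\ convergence of the series and of its Laplacian, fractional time derivative on $(0,\beta]$, and classical time derivative on $(-\alpha,0)$, in the topologies demanded by Definition \ref{def1}, together with the three auxiliary conditions.

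First I would derive a uniform pointwise bound on each coefficient $T_k(t)$. For $t\in(0,\beta]$, Lemma \ref{lem2.2} gives $|E_{\rho,1}(-\lambda_k t^\rho)|\le C_0$, and Lemma \ref{lem2.6} (with a free parameter $\varepsilon\in(0,\rho)$) yields a bound of size $\lambda_k^{\varepsilon-1}\max_s|F_k(s)|$ for the Duhamel convolution. For $t\in[-\alpha,0]$, the elementary bound $e^{\lambda_k t}\le 1$ together with $|F_k^*|\le \alpha\max_s|F_k(s)|$ handles both pieces, and by Lemma \ref{lem4.1} the division by $\delta_k$ is harmless. Feeding these bounds into Lemma \ref{lem2.1} for some $\sigma>N/4$ reduces the uniform convergence of \eqref{yech1} on $\overline\Omega\times[-\alpha,\beta]$ to the finiteness of $\sum_k\lambda_k^\tau\,\max_s|F_k(s)|^2$ for some $\tau>N/2$, which is exactly what the Il'in-type condition (\ref{1.27}) secures under (\ref{1.28}).

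The delicate point is the Laplacian, since termwise differentiation multiplies each coefficient by $\lambda_k$. On $(0,\beta]$, Lemma \ref{lem2.2} upgrades to $\lambda_k|E_{\rho,1}(-\lambda_k t^\rho)|\le C/t^\rho$, uniformly bounded on every compact subset of $(0,\beta]$, while the convolution term contributes an additional factor $\lambda_k^\varepsilon$ via Lemma \ref{lem2.6}; one chooses $\varepsilon<\min(\rho,1/2)$ so that the tail sum $\sum_k\lambda_k^{\tau+2\varepsilon}\max_s|F_k(s)|^2$ is dominated by the one extra power of $\lambda_k$ granted by (\ref{1.28}) over (\ref{1.27}). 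On $(-\alpha,0)$, the factor $\lambda_k e^{\lambda_k t}$ is uniformly bounded on every compact subinterval by exponential decay, so the same machinery delivers continuity of $\Delta u$ there. The fractional derivative $D_t^\rho u$ on $(0,\beta]$ and the first-order derivative $u_t$ on $(-\alpha,0)$ then come for free from the ODEs \eqref{eq.1} and \eqref{eq..1}, respectively, and absolute continuity of $u(x,\cdot)$ on $[0,\beta]$ is inherited from the smoothness of $E_{\rho,1}$ and of the Duhamel integral.

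The main obstacle is the calibration of $\varepsilon$ in Lemma \ref{lem2.6} against the Il'in regularity (\ref{1.28}): the squared norm of the Laplacian of the Duhamel term carries a factor $\lambda_k^{2\varepsilon}$ that must be absorbed by the single additional power of $\lambda_k$ that (\ref{1.28}) provides over (\ref{1.27}), and the resulting convergence must then be shown to be uniform on each compact subset of $(0,\beta]$ despite the singularity at $t=0^+$. Once this is in place, the boundary condition \eqref{prob1.2} follows from $v_k|_{\partial\Omega}=0$, the gluing condition \eqref{prob1.3} by comparing the two branches of \eqref{yech1} at $t=\pm0$, and the non-local condition \eqref{prob1.4} is built in through the choice $a_k=b_k=F_k^*/\delta_k$ derived in \eqref{17.1}. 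Collecting these steps yields a classical solution in the sense of Definition \ref{def1}, completing the proof.
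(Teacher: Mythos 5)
Your proposal follows essentially the same route as the paper: termwise application of $-\Delta$ to the series \eqref{yech1}, the $\hat{A}^{-\sigma}$ device with Lemma \ref{lem2.1} ($\sigma>N/4$), the Mittag--Leffler bounds of Lemmas \ref{lem2.2} and \ref{lem2.6} with an arbitrarily small $\varepsilon$, reduction to an Il'in-type series under \eqref{1.28}, recovery of $D_t^\rho u$ and $u_t$ from the equation, and uniqueness via Theorem \ref{th3.1} combined with Lemma \ref{lem4.1}. The only cosmetic difference is that you bound $|F_k^*|$ by $\max_s|F_k(s)|$ where the paper uses Cauchy--Schwarz and the generalized Minkowski inequality, which does not change the argument.
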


\begin{proof}\vspace{-0.5em}
Let  formally differentiate series (\ref{yech1}). As a result we have

\begin{equation}\label{yech1.1}
-\Delta u(x,t)= \left\{
\begin{aligned}
&\sum\limits _{k=1}^{\infty }\left(\frac{\lambda_{k}F_{k}^*}{\delta_k}E_{\rho,1}(-\lambda_{k}t^{\rho})+\lambda_{k}\int\limits_{0}^{t}s^{\rho-1}E_{\rho,\rho}\left(-\lambda_{k}s^{\rho}\right)F_{k}(t-s)\,ds\right)v_{k}(x),\quad t>0, \\
&  \sum\limits _{k=1}^{\infty }\left(\frac{\lambda_{k}F_{k}^*}{ \delta_k} e^{\lambda_k t}-\lambda_{k}\int\limits_{t}^0 F_{k}(s) e^{\lambda_k(t-s)} ds\right)v_{k}(x), \quad t<0.\\
\end{aligned}
\right.   
\end{equation}

Consider the case for $t>0$, and in the case $t<0$  the absolute convergence of solution (\ref{yech1.1}) is proved in a similar way. This series is the sum of two series. We denote the first sum by $-\Delta S_{1}(x,t)$ , and the second by $-\Delta S_{2}(x,t)$. Let the partial sums of the first and second terms have the following forms, respectively:
\begin{equation}\label{eq38}
-\Delta S^{j}_{1} (x,t)=\sum\limits_{k=1}^{j}\frac{\lambda_{k}\left(\int\limits_{-\alpha}^0  F_k(s) e^{\lambda_k(-\alpha-s)} ds\right)E_{\rho,1}(-\lambda_{k}t^{\rho})}{\delta_k}v_{k}(x),
\end{equation}
\begin{equation}\label{eq39}
-\Delta S^{j}_{2} (x,t)=\sum\limits_{k=1}^{j}\lambda_k\left(\int\limits_{0}^{t}\eta^{\rho-1}E_{\rho,\rho}\left(-\lambda_{k}\eta^{\rho}\right)F_{k}(t-s)\,ds\right)v_{k}(x).
\end{equation}
In what follows, the symbol $C$ will denote a positive constant, not necessarily the same one. 

Let $\sigma > \frac{N}{4}$. Since  $\hat{A}^{-\sigma} v_{k} (x)=\lambda _{k}^{-\sigma }v_{k}(x),$ we have for (\ref{eq38}) 
\[
-\Delta S_{1}^{j} (x,t)=\hat{A}^{-\sigma } \sum\limits _{k=1}^{j}\frac{\lambda _{k}^{\sigma+1}\left(\int\limits_{-\alpha}^0  F_k(s) e^{\lambda_k(-\alpha-s)} ds\right)E_{\rho,1}(-\lambda_{k}t^{\rho})}{\delta_k}v_{k}(x).
\]
By virtue of Lemma \ref{lem2.1} we obtain 
\[
\begin{array}{l} {\left\| -\Delta S_{1}^{j} (x,t)\right\|^{2} _{C(\Omega )}\le C\left\|\sum \limits_{k=1}^{j}\frac{\lambda _{k}^{\sigma+1}\left(\int\limits_{-\alpha}^0  F_k(s) e^{\lambda_k(-\alpha-s)} ds\right)E_{\rho,1}(-\lambda_{k}t^{\rho})}{\delta_k}v_{k}(x)\right\|^{2} _{L_{2} (\Omega ) .}}   \end{array}
\]
Since the system $\left\{v_{k} \right\}$ is orthonormal, by applying Parseval's equality and using Lemma \ref{lem2.2} one has
\begin{equation}\nonumber
 \left\| -\Delta S_{1}^{j} (x,t)\right\|^{2} _{C(\Omega )}\le C t^{-2\rho} \sum\limits_{k=1}^{j}\lambda _{k}^{2\sigma}\left|\int\limits_{-\alpha}^0  F_k(s) e^{\lambda_k(-\alpha-s)} ds\right|^{2}.
\end{equation}
Applying the Cauchy-Schwarz  inequality
$$
\left\| -\Delta S_{1}^{j} (x,t)\right\|^{2} _{C(\Omega )}\le \frac{Ct^{-2\rho}}{\lambda^{2}_{1}}\int\limits_{-\alpha}^{0}\sum\limits_{k=1}^{j}\lambda _{k}^{2\sigma}\left|F_k(s) \right|^{2}\,ds,\quad \tau=2\sigma>\frac{N}{2}.$$
This means that we have series, similar to the series \eqref{1.27}. Thus, if the function $F(x,t)$ satisfies conditions \eqref{1.28} with $\tau>\frac{N}{2}$, then the series $\left| -\Delta S_{1}(x,t)\right| _{C(\overline\Omega )}^{2}\le C, $ $t> 0$, will converge.

For the series \eqref{eq39} by virtue of Lemma \ref{lem2.6}  we get
\begin{equation*}
\left\| -\Delta S_{2}^{j} (x,t)\right\| _{C(\Omega )}^{2} \le C\sum \limits_{k=1}^{j}\left|\int _{0}^{t}s^{\varepsilon\rho-1} \lambda _{k}^{\sigma +\varepsilon } F_{k} (t-s)ds\right|^{2}.
\end{equation*}
Further, will apply the generalized Minkowski inequality. Then 
\begin{equation}\label{eq445}
\left\| -\Delta S_{2}^{j}(x,t)\right\| _{C(\Omega )}^{2} \le C\left[\int _{0}^{t}s^{\rho\varepsilon -1} \left(\sum \limits_{k=1}^{j}\left|\lambda _{k}^{2(\sigma +\varepsilon) }|| F_{k} (t-s)\right|^{2}  \right)^{\frac{1}{2} } ds \right]^{2}, \quad 
\tau=2\sigma +2\varepsilon  >\frac{N}{2}.
\end{equation}

Here we again get a series similar to \eqref{1.27}. In this case, $\tau=2\sigma +2\varepsilon$. Since $\varepsilon$ is an arbitrarily small number, then the series (\ref{eq445}) converges under the same conditions (\ref{1.28}) for the function $F(x,t)$.

Consequently,  $\left| -\Delta S_{1}(x,t)\right| _{C(\overline\Omega )}^{2}\le C, $ $\left|-\Delta S_{2} (x,t)\right|_{C\left (\Omega \right)}^{2} \le C,\, \, \, \, \, t>0$. Thus $\Delta u(x,t)\in C(\overline{\Omega}\times(0,\beta))$, in particular $u(x,t)\in C(\overline{\Omega}\times[0,\beta])$. Using completely similar reasoning, it can be shown that sum (\ref{yech1.1}) at $t<0$ has the same properties as sum (\ref{yech1.1}) at $t>0$ . Hence, $\Delta u(x,t)\in C(\overline{\Omega}\times(-\alpha,0))$, in particular $u(x,t)\in C(\overline{\Omega}\times[-\alpha,0])$.

From equation \eqref{prob1.1},  we have  $D_{t}^{\rho}u(x,t)\in C(\overline{\Omega}\times(0,\beta)), \, \, \,  u_{t}(x,t)\in C(\overline{\Omega}\times(-\alpha,0)).$ That $u(x, t)$ is absolutely continuous in a closed region follows from the fact that every function $T_k(t) v_k(x)$ is such. Theorem \ref{th5.1} is proved.
\end{proof}

\begin{lem}\label{lem4.2}
 Let $0<\lambda <1$. Then there exists a number $k_0 \in \mathbb{N}$, such that for all $k > k_0$, the following estimate holds:
 $$\left| \delta_k \right| \ge \frac{\lambda}{2}.$$
\end{lem}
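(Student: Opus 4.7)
The plan is to exploit the fact that the Dirichlet eigenvalues $\{\lambda_k\}$ of $-\Delta$ on $\Omega$ form a non-decreasing sequence tending to $+\infty$ as $k \to \infty$. Because $\alpha > 0$, this immediately forces $e^{-\lambda_k \alpha} \to 0$ as $k \to \infty$, while the constant $\lambda \in (0,1)$ stays fixed away from $0$. So $\delta_k = e^{-\lambda_k \alpha} - \lambda$ is eventually close to $-\lambda$, and in particular its absolute value is eventually at least $\lambda/2$.

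Quantitatively, I would argue as follows. Fix $\lambda \in (0,1)$. Since $\lambda_k \to \infty$, we can choose $k_0 \in \mathbb{N}$ such that
\begin{equation*}
\lambda_{k_0+1} \geq \frac{1}{\alpha}\ln\!\frac{2}{\lambda}.
\end{equation*}
Because the sequence is non-decreasing, this same inequality holds for every $k > k_0$, so that $e^{-\lambda_k \alpha} \leq \lambda/2$. Then, since $e^{-\lambda_k\alpha} > 0$,
\begin{equation*}
|\delta_k| = \bigl|e^{-\lambda_k\alpha} - \lambda\bigr| = \lambda - e^{-\lambda_k\alpha} \geq \lambda - \frac{\lambda}{2} = \frac{\lambda}{2},
\end{equation*}
which is the desired estimate.

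There is really no substantive obstacle here; the only point to be careful about is the sign of $\delta_k$ for large $k$. Unlike the case $\lambda \notin [0,1)$ treated in Lemma \ref{lem4.1}, here $\delta_k$ can in principle vanish for finitely many indices $k \leq k_0$ (namely those for which $e^{-\lambda_k \alpha}$ happens to coincide with $\lambda$), which is precisely why the statement only provides a uniform lower bound for $k > k_0$ and not for all $k$. The rest is just the observation that $e^{-\lambda_k\alpha}$ enters the regime below $\lambda/2$ once $\lambda_k$ passes the explicit threshold $\alpha^{-1}\ln(2/\lambda)$.
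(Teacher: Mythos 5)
Your argument is correct and is precisely the straightforward reasoning the paper leaves implicit (the authors state Lemma \ref{lem4.2} with the remark that it "is not hard to see"): since $\lambda_k\to\infty$, one has $e^{-\lambda_k\alpha}\le\lambda/2$ for all $k$ beyond an explicit threshold, whence $|\delta_k|=\lambda-e^{-\lambda_k\alpha}\ge\lambda/2$. Your closing remark about the sign of $\delta_k$ and the possibility that $\delta_k=0$ for finitely many small indices also matches the paper's subsequent discussion of the case $\lambda_k=\lambda_0$.
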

 
If $0<\lambda<1$, then obviously, there is a unique $\lambda_0>0$ such that $e^{-\lambda_{0}\alpha}=\lambda$. If $\lambda_k\neq \lambda_0$ for all $k\in\mathbb{N}$ then the formal solution of problem \eqref{prob1.1}-\eqref{prob1.4} has the form \eqref{yech1}. 

If $\lambda_k=\lambda_0$ for $k = k_{0}, k_{0} + 1, \ldots, k_{0} + p_{0} - 1$, where $p_{0}$ is the multiplicity of the eigenvalue $\lambda_{k_0}$, then for the solvability of problem \eqref{prob1.1}-\eqref{prob1.4}  it is necessary and sufficient that the following equality holds (see \eqref{17.1}):
    \begin{equation}\label{orto}
    F^{*}_k = (F^{*}, v_k) = 0, \quad k \in K_0, \quad K_0 = \{k_0, k_0 + 1, \ldots, k_0 + p_0 - 1\}.
    \end{equation} 
 In this case, the solution of problem \eqref{prob1.1}-\eqref{prob1.4} can be written as follows:
\begin{equation}\label{um10}
u(x,t) = 
\begin{cases}
\displaystyle 
\sum\limits_{k \notin K_0} 
\left(
    \frac{F_{k}^*}{\delta_k} E_{\rho,1}(-\lambda_{k} t^{\rho}) 
    + \int\limits_{0}^{t} s^{\rho-1} E_{\rho,\rho}(-\lambda_{k} s^{\rho}) F_k(t-s)\, ds
\right) v_k(x) 
\\[0.5em]
\displaystyle \quad + \sum\limits_{k \in K_0} a_k E_{\rho,1}(\lambda_k t^{\rho}) v_k(x), 
\quad t > 0, \\[1em]

\displaystyle 
\sum\limits_{k \notin K_0} 
\left(
    \frac{F_k^*}{\delta_k} e^{\lambda_k t} 
    - \int\limits_{t}^{0} F_k(s) e^{\lambda_k(t-s)} ds
\right) v_k(x) 
+ \sum\limits_{k \in K_0} a_k e^{\lambda_k t} v_k(x), 
\quad t < 0,
\end{cases}
\end{equation}
\noindent
here, $a_k$ are arbitrary constants.

Thus, we obtain the following statement:

\begin{thm}\label{th5.2}
Let $0<\lambda<1$ and  let function $F(x,t)$ satisfy the conditions (\ref{1.28}) for all $t$.

1) If  $\lambda_k\neq \lambda_{0}$, for all $k\ge 1$, then there exists a unique solution of the problem  (\ref{prob1.1})-(\ref{prob1.4}) and it can be represented as the form \eqref{yech1}.

2) If $\lambda_{k} = \lambda_{0}$, for some $k$ and the orthogonality condition \eqref{orto} holds for indices $ k \in K_0$, then the problem  (\ref{prob1.1})-(\ref{prob1.4}) has a solution, which is expressed in the form (\ref{um10}) with arbitrary coefficients $a_{k}$.
\end{thm}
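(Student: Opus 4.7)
The plan is to mirror the structure of the proof of Theorem 5.1, using Lemma 4.2 to recover a uniform lower bound on $|\delta_k|$, and then to handle the finite set $K_0$ separately in part 2.

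First, for part 1, I would observe that the hypothesis $\lambda_k\neq\lambda_0$ for every $k$ ensures $\delta_k\neq 0$ for every $k$. By Lemma 4.2 there exists $k_0$ such that $|\delta_k|\geq \lambda/2$ for all $k>k_0$, while the finitely many indices $k\leq k_0$ also satisfy $|\delta_k|>0$. Taking the minimum of $\lambda/2$ and $\min_{k\leq k_0}|\delta_k|$, we obtain a uniform constant $\delta_0>0$ with $|\delta_k|\geq\delta_0$ for all $k$. With this lower bound in hand, the series \eqref{yech1} and the termwise-differentiated series \eqref{yech1.1} can be estimated exactly as in Theorem \ref{th5.1}: split $-\Delta u$ into $S_1+S_2$, apply $\hat{A}^{-\sigma}$ and Lemma \ref{lem2.1}, use Parseval together with Lemma \ref{lem2.2} for $S_1$ and Lemma \ref{lem2.6} followed by the generalized Minkowski inequality for $S_2$, and invoke \eqref{1.28} to guarantee convergence of the resulting series of type \eqref{1.27}. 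Uniqueness follows directly from Theorem \ref{th3.1} since no $\delta_k$ vanishes.

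For part 2, I would partition the indices into $k\notin K_0$ and $k\in K_0$, where $K_0=\{k_0,\dots,k_0+p_0-1\}$ is finite because each eigenvalue of the Dirichlet Laplacian has finite multiplicity. On the complement of $K_0$ the bound $|\delta_k|\geq\delta_0>0$ is again valid (applying Lemma 4.2 away from the finite exceptional set), so the sums over $k\notin K_0$ in \eqref{um10} converge by the same argument as in part 1. For $k\in K_0$ the orthogonality condition \eqref{orto} is precisely the solvability condition derived from \eqref{17.1}, making every choice of $a_k$ admissible; the corresponding contribution to $u(x,t)$ is a finite linear combination of the smooth functions $a_k E_{\rho,1}(-\lambda_k t^\rho)v_k(x)$ and $a_k e^{\lambda_k t}v_k(x)$, which obviously belongs to the classes required by Definition \ref{def1} and satisfies the homogeneous equation, the Dirichlet condition \eqref{prob1.2}, the gluing condition \eqref{prob1.3}, and, since $e^{-\lambda_k\alpha}=\lambda$ for $k\in K_0$, also the non-local condition \eqref{prob1.4}. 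Therefore \eqref{um10} defines a solution.

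The only nontrivial technical point is verifying that Lemma 4.2 is applicable uniformly, but this is immediate from the statement already proved. The rest of the work is a direct transcription of the estimates established in Theorem \ref{th5.1}; nothing new needs to be constructed beyond separating off the finite block $K_0$ and invoking the orthogonality hypothesis \eqref{orto} to justify the arbitrariness of the constants $a_k$ there.
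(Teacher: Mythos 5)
Your proposal is correct and takes essentially the same route as the paper: part 1 is handled by rerunning the estimates of Theorem \ref{th5.1} (with the lower bound on $|\delta_k|$ now supplied by Lemma \ref{lem4.2} together with the finitely many remaining nonzero $\delta_k$), and part 2 treats the finite block $K_0$ as a finite sum of smooth functions satisfying Definition \ref{def1}, with \eqref{orto} serving as the solvability condition that leaves the $a_k$ arbitrary. You are in fact slightly more explicit than the paper in justifying the uniform bound on $|\delta_k|$ and in checking the non-local condition for $k\in K_0$, but the argument is the same.
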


\begin{proof}\vspace{-0.5em}   
We have considered the proof of the first part of the theorem above in Theorem \ref{th5.1}. Now, we need to show the convergence of the series \eqref{um10}. 
If $k\in K_{0}$, then in the solution (\ref{um10}) are formed additional series as: 
$$
u_{0}(x,t)=\left\{
\begin{aligned}
& \sum\limits_{k\in K_{0}}a_{k}E_{\rho ,1}\left(-\lambda_{k}t^{\rho}\right)v_{k} (x), \quad t>0, \\
& \sum\limits_{k \in K_0} a_k e^{\lambda_k t} v_k(x), \quad t<0.\\
\end{aligned}
\right.$$
Since $K_{0}$ has a finite number of elements, than these series, consist of finite sum of smooth functions. Therefore, these series satisfy all conditions of Definition \ref{def1}.
\end{proof}

\section{ Existence and uniqueness of the solution of the inverse problem (\ref{prob1.1})-(\ref{um8})}

We study the inverse problem for equation \eqref{prob1.1} with the right-hand side of the form $F(x,t) = f(x)g(t)$, where $g(t)$ is a given function and $f(x)$ is an unknown function. Furthermore, since we use the solution of the forward problem when solving the inverse problem, in all subsequent sections we assume that $\delta_k \neq 0$ for all $k$. According to the additional condition \eqref{um8}, it is sufficient to construct the solution of the inverse problem \eqref{prob1.1}-\eqref{um8} only for $t > 0$.  Using representation \eqref{yech1}, we obtain the following solution to the inverse problem \eqref{prob1.1}-\eqref{um8}:
\begin{equation}\label{yech2}
 u(x,t)= \sum\limits _{k=1}^{\infty }\left(a_{1k}E_{\rho,1}(-\lambda_{k}t^{\rho})+f_{k}\int\limits_{0}^{t}s^{\rho-1}E_{\rho,\rho}\left(-\lambda_{k}s^{\rho}\right)g(t-s)\,ds\right)v_{k}(x),\quad 0\le t\le\beta,  
\end{equation}
where 
$$a_{1k}=\frac{f_{k}\int\limits_{-\alpha}^0  g(s) e^{\lambda_k(-\alpha- s)} ds}{\delta_k}.$$
Substituting the function \eqref{yech2} into the condition \eqref{um8}, we obtain the equation 
\begin{equation}\label{N14}
 \sum\limits_{k=1}^\infty  T_{k}(t_{0})v_{k}(x)=\varphi_{0}(x)=\sum\limits_{k=1}^\infty \varphi_{0k}v_{k}(x), 
\end{equation}
where 
$$T_{k}(t_0)=\frac{f_{k}\int\limits_{-\alpha}^0  g(s) e^{\lambda_k(-\alpha- s)} ds}{\delta_k}E_{\rho,1}(-\lambda_{k}t_0^{\rho})+f_{k}\int\limits_{0}^{t_0}\eta^{\rho-1}E_{\rho,\rho}\left(-\lambda_{k}\eta^{\rho}\right)g(s)\,ds,$$
and 
\begin{equation}\nonumber
\varphi_{0k} = \int\limits_{\Omega} \varphi_{0}(x) v_k(x) \, dx, \quad k = 1, 2, \dots ,
\end{equation}
the numbers $f_{k}$ are so far unknown and have to be determined.

From relation \eqref{N14}, we have
\begin{equation}\label{N16}
f_{k}\Delta_{k}(t_{0})=\delta_k\varphi_{0k}=(e^{-\lambda_k\alpha}-\lambda)\varphi_{0k},    
\end{equation}
here
\begin{equation}\nonumber
  \Delta_{k}(t_{0})=E_{\rho,1}(-\lambda_{k}t_0^{\rho})\int\limits_{-\alpha}^0  g(s) e^{\lambda_k(-\alpha- s)} ds +(e^{-\lambda_k\alpha}-\lambda)\int\limits_{0}^{t_0}s^{\rho-1}E_{\rho,\rho}\left(-\lambda_{k}s^{\rho}\right)g(t_{0}-s)\,ds.
\end{equation}
Let us introduce the following notation:
$$I_{k}(\alpha)=\int\limits_{-\alpha}^0  g(s) e^{\lambda_k(-\alpha- s)} ds,\quad I_{k,\rho}(t_0)=\int\limits_{0}^{t_0}s^{\rho-1}E_{\rho,\rho}\left(-\lambda_{k}s^{\rho}\right)g(t_{0}-s)\,ds.$$
Again, as we noted above, if $\Delta_{k}(t_0)\neq 0$  for all $k$, then the coefficients $f_{k}$  are found uniquely, otherwise, i.e. if $\Delta_{k}(t_0)=0$ for some $k$, according to the equation (\ref{N16}), we can see that the coefficients $f_{k}$ are chosen arbitrarily. Therefore, we have the following uniqueness criterion for the inverse problem \eqref{prob1.1}-\eqref{um8}:

\begin{thm}\label{th6.1}
The uniqueness of the solution to the inverse problem \eqref{prob1.1}-\eqref{um8} is guaranteed if and only if $\Delta_k(t_0)\ne 0$ for all $k\ge 1$.
\end{thm}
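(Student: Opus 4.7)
The plan is to reduce the uniqueness question to the algebraic relation $f_k \Delta_k(t_0) = \delta_k \varphi_{0k}$ that has already been derived, and then invoke the forward-problem uniqueness (Theorem \ref{th3.1}) to upgrade uniqueness of the Fourier coefficients $f_k$ to uniqueness of the full pair $(u,f)$. Throughout, I would use the standing hypothesis $\delta_k \neq 0$ stated at the opening of Section 4.

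For sufficiency (the ``if'' direction), I would suppose $(u_1,f_1)$ and $(u_2,f_2)$ are two solutions of \eqref{prob1.1}--\eqref{um8} corresponding to the same over-determination data $\varphi_0$, and set $u = u_1-u_2$, $f = f_1-f_2$. Then $(u,f)$ solves the homogeneous inverse problem, i.e.\ equations \eqref{prob1.1}--\eqref{prob1.4} with $F(x,t) = f(x)g(t)$ and $u(x,t_0) \equiv 0$. Applying the forward-problem construction to $u$ (which is legitimate since $\delta_k \neq 0$), one recovers the representation \eqref{yech2} for $u$ with Fourier coefficients $f_k = (f,v_k)$. Repeating the derivation that produced \eqref{N14}--\eqref{N16}, now with $\varphi_{0k} \equiv 0$, yields $f_k \Delta_k(t_0) = 0$ for every $k \geq 1$. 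The hypothesis $\Delta_k(t_0) \neq 0$ then forces $f_k = 0$, whence $f \equiv 0$ in $L_2(\Omega)$ and, by the regularity class $C(\overline{\Omega})$ prescribed for $f$, pointwise. Consequently $u$ solves the homogeneous forward problem with $F \equiv 0$, and Theorem \ref{th3.1} gives $u \equiv 0$, so $(u_1,f_1) = (u_2,f_2)$.

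For necessity (the ``only if'' direction), I would exhibit a nontrivial solution of the homogeneous inverse problem whenever $\Delta_{k_1}(t_0) = 0$ for some $k_1$. Take $f(x) = v_{k_1}(x)$ (smooth by elliptic regularity on the smoothly bounded domain $\Omega$, hence in $C(\overline{\Omega})$) and let $u$ be produced by the forward-problem representation \eqref{yech2} with these Fourier data. All coefficients $T_k(t)$ vanish for $k \neq k_1$, while a direct computation using the definitions of $a_{1k}$ and $\Delta_k(t_0)$ gives $T_{k_1}(t_0) = \Delta_{k_1}(t_0)/\delta_{k_1} = 0$. Hence $u(x,t_0) \equiv 0$, so the pairs $(0,0)$ and $(u,v_{k_1})$ are two distinct solutions of the inverse problem with zero over-determination data, contradicting uniqueness.

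The only delicate step is identifying $u = u_1 - u_2$ with its Fourier series \eqref{yech2} in the sufficiency direction; but this is immediate from the construction and uniqueness of the forward problem applied to the difference, so no genuinely new estimate is needed beyond what is already available from Section 3. Thus the principal content of the theorem is algebraic, reflecting the structure of the solvability condition \eqref{N16}.
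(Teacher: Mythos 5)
Your sufficiency argument is, in substance, the paper's own proof: reduce the difference $(u,f)=(u_1-u_2,\,f_1-f_2)$ to the modewise relation $f_k\Delta_k(t_0)=0$, conclude $f_k=0$, and then $u\equiv 0$. (The paper finishes by showing the constants $A_k=B_k=0$ directly rather than citing Theorem \ref{th3.1}, but under the standing assumption $\delta_k\neq 0$ of Section 4 your appeal to Theorem \ref{th3.1} is equally valid.) Your explicit one-mode counterexample for the ``only if'' direction is also correct: with $f=v_{k_1}$ one gets $T_{k_1}(t_0)=\Delta_{k_1}(t_0)/\delta_{k_1}=0$, so $(u,v_{k_1})$ and $(0,0)$ are two distinct solutions with the same data $\varphi_0\equiv 0$. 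This is slightly more explicit than the paper, whose proof treats only sufficiency and disposes of necessity by the remark preceding the theorem that the $f_k$ in \eqref{N16} become arbitrary when $\Delta_k(t_0)=0$.

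The one step you should repair is the identification of $u=u_1-u_2$ with the representation \eqref{yech2}. You justify it by ``the construction and uniqueness of the forward problem applied to the difference,'' but the existence half of that argument (Theorem \ref{th5.1}) requires the right-hand side to satisfy the smoothness conditions \eqref{1.28}, whereas the difference $f=f_1-f_2$ is only known to lie in $C(\overline{\Omega})$; uniqueness alone does not supply a series representation. The correct justification --- and the route the paper takes --- is direct projection: set $T_k(t)=(u(\cdot,t),v_k)$ and $f_k=(f,v_k)$, use the regularity required in Definition \ref{def1} together with the self-adjointness of $\Delta$ (integration by parts with \eqref{prob1.2}) to get $D_t^\rho T_k+\lambda_k T_k=f_k g(t)$ for $t>0$ and $T_k'-\lambda_k T_k=f_k g(t)$ for $t<0$, solve these ODEs, and impose \eqref{prob1.3}, \eqref{prob1.4} and $T_k(t_0)=0$ to arrive at $f_k\Delta_k(t_0)=0$. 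Since this is exactly the modewise identity you rely on (and you in fact gesture at it by ``repeating the derivation of \eqref{N14}--\eqref{N16}''), the flaw is one of justification rather than of substance.
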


\begin{proof}\vspace{-0.5em}
It is sufficient to prove that if $\Delta_k(t_0)\neq 0$ for all $k \ge 1$, then the solution is unique. Assume the contrary, there are two different solutions $\{u_1,f_1\}$ and $\{u_2,f_2\}$ satisfying the inverse problem \eqref{prob1.1}-\eqref{um8}. We need to show that $u\equiv u_1-u_2 \equiv 0$, $f\equiv f_1-f_2\equiv 0$. For $\{u,f\}$ we have the following problem:
$$\left\{
\begin{aligned}
& D_t^\rho u-\Delta u=f(x)g(t),\quad 0<t<\beta, \\
& u_{t}+\Delta u=f(x)g(t), \quad -\alpha <t<0,\\
& u(x,t)|_{\partial\Omega}=0,\quad -\alpha< t< \beta, \\
& u(x,+0)=u(x,-0),\\
& u(x,-\alpha)=\lambda u(x,0) ,\quad x \in \Omega,\\
& u(x, t_{0})=0, \quad x \in \Omega, \quad 0<t_0<\beta.
\end{aligned}
\right.$$
We take any solution $\{u,f\}$ and define $T_k(t)=(u,v_k)$ and $f_k=(f,v_k)$. Then, due to the self-adjointness of the operator $\Delta$ and the continuity of the derivatives of the solution up to the boundary of the domain $\Omega$, we have 
$$
  D_t^\rho T_k(t)= (D_t^\rho u, v_k)=(\Delta u, v_k)+f_k g(t) =( u,\Delta v_k)+f_k g(t) =-\lambda_k T_k(t)+f_k g(t),  
$$
$$
  \frac{\partial T_k(t)}{\partial t}= \left(\frac{\partial u}{\partial t}, v_k\right)=(-\Delta u, v_k)+f_k g(t) =( u,-\Delta v_k)+f_k g(t) =\lambda_k T_k(t)+f_k g(t).  
$$
Therefore, for $T_k$ we obtain the  the following differential equations:
$$
\left\{
\begin{aligned}
& D_{t}^{\rho } T_{k} (t)+\lambda _{k} T_{k} (t)=f_{k}g(t), \quad t>0, \\
& T_{k}^{'} (t)-\lambda _{k} T_{k} (t)=f_{k}g(t), \quad t<0,\\
\end{aligned}
\right.
$$
where $f_{k}$ is the Fourier coefficients of the function $f(x)$ according to the system of eigenfunctions $\left\{v_{k} (x)\right\} $ defined as the scalar product in $L_{2} (\Omega).$

The solutions of the last differential equations, respectively have the form:
$$
T_{k}(t)= \left\{
\begin{aligned}
& A_{k}E_{\rho,1}(-\lambda_{k}t^{\rho})+f_{k}\int\limits_{0}^{t}s^{\rho-1}E_{\rho,\rho}\left(-\lambda_{k}s^{\rho}\right)g(s)\,ds,\quad t>0, \\
& B_{k} e^{\lambda_k t}-f_{k}\int\limits_{t}^0 g(s) e^{\lambda_k(t-s)} ds,\quad t<0.\\
\end{aligned}
\right.   
$$
The resulting differential equation satisfies the conditions (\ref{prob1.3}), (\ref{prob1.4}), and (\ref{um8}), which transform into the following:
$$T_k(-0) = T_k(+0), \quad T_k(-\alpha) = \lambda T_k(0), \quad T_k(t_0) = 0, \quad k\ge 1.$$
Now, based on these conditions, we will prove the uniqueness of the solution to the inverse problem.

From the condition $ T_k(-0) = T_k(+0)$, and according to the above calculations, it follows that $ A_k = B_k$. 

Taking into account the condition $T_k(-\alpha) = \lambda T_k(0)$, we obtain the following:
\begin{equation}\label{N19}
 A_{k}=\frac{f_{k}\int\limits_{-\alpha}^0  g(s) e^{\lambda_k(-\alpha- s)} ds}{\delta_k}.   
\end{equation}

Now, let us verify the final condition $T_k(t_0) = 0$. This leads to the following equation:
\begin{equation}\label{N18}
f_k\left( E_{\rho,1}(-\lambda_k t_0^{\rho}) \int\limits_{-\alpha}^{0} g(s) e^{\lambda_k(-\alpha - s)}\, ds + (e^{-\lambda_k \alpha} - \lambda) \int\limits_{0}^{t_0} s^{\rho - 1} E_{\rho,\rho}(-\lambda_k s^{\rho}) g(t_0 - s)\, ds \right) = 0.
\end{equation}
Since $ \Delta_k(t_0) \ne 0$ for all $k \in \mathbb{N} $, the equality \eqref{N18} holds if and only if $f_k = 0$. From this, it follows by equation \eqref{N19} that $A_k = B_k = 0$. Then due to completeness of the set of eigenfunctions $\{v_k\}$ in $L_2(\Omega)$, we finally have  $f(x)\equiv 0$ and  $u(x,t)\equiv0$.
\end{proof}

Theorem \ref{th6.1} is proved.

\section{Lower estimates for the denominator of the solution to the inverse problem 
(\ref{prob1.1})-(\ref{um8})}

We now provide a lower estimate for $\Delta_{k}(t_{0})$. To do this, it is required that the function $g(t)$ does not vanish. Since  $g(t)\neq 0$, and it is continuous, the analysis remains the same whether $g(t)>0$ or $g(t)<0$. Therefore, in the four lemmas to be proved below, we will assume without loss of generality that $g(t)>0$.

Let $g \in C[-\alpha, \beta]$ and $g(t) \neq 0$, we define
\begin{equation}\label{minmax}
 m= \min_{t \in [-\alpha,t_0]} \{g(t)\} > 0, \quad M= \max_{t \in [-\alpha,t_0]} \{g(t)\}>0.  
\end{equation}

\begin{lem}\label{lem7.1}
Let $\lambda<0$,  $g(t)\in C[-\alpha,\beta]$ and $g(t)\neq 0$, $t\in [-\alpha,\beta]$. Then, there is a constant $C >0$, depending on $t_0$ and $\alpha$, such that for all $k$:
$$\Delta_k (t_0)\geq \frac{C}{\lambda_k}.$$
\end{lem}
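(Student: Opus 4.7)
The plan is to drop the first summand in $\Delta_k(t_0)$ (which is nonnegative) and to extract the required $1/\lambda_k$ decay from the second summand using Lemmas~\ref{lem2.2}--\ref{lem2.5}. Since $g$ is continuous and nowhere zero on $[-\alpha,\beta]$, by the intermediate value theorem it has constant sign, and by the standing convention we may assume $g>0$, with $m>0$ as in \eqref{minmax}. Because $\lambda<0$ and $\lambda_k>0$, one has
$$e^{-\lambda_k\alpha}-\lambda=e^{-\lambda_k\alpha}+|\lambda|\ge |\lambda|>0,$$
and $E_{\rho,1}(-\lambda_k t_0^\rho)>0$ by Lemma~\ref{lem2.3}, so both summands in the expression for $\Delta_k(t_0)$ are nonnegative and it suffices to bound the second one from below.

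For the second summand I would estimate $g(t_0-s)\ge m$ on $(0,t_0)$, which gives
$$\int_0^{t_0}s^{\rho-1}E_{\rho,\rho}(-\lambda_k s^\rho)\,g(t_0-s)\,ds\ \ge\ m\int_0^{t_0}s^{\rho-1}E_{\rho,\rho}(-\lambda_k s^\rho)\,ds.$$
Applying Lemma~\ref{lem2.4} with $\mu=\rho+1$ turns the remaining integral into $t_0^\rho E_{\rho,\rho+1}(-\lambda_k t_0^\rho)$, and then the recurrence relation from Lemma~\ref{lem2.5} (written as $\lambda_k t_0^\rho E_{\rho,1+\rho}(-\lambda_k t_0^\rho)=1-E_{\rho,1}(-\lambda_k t_0^\rho)$) converts this into
$$\int_0^{t_0}s^{\rho-1}E_{\rho,\rho}(-\lambda_k s^\rho)\,g(t_0-s)\,ds\ \ge\ \frac{m\bigl(1-E_{\rho,1}(-\lambda_k t_0^\rho)\bigr)}{\lambda_k}.$$
Combining with the lower bound on $e^{-\lambda_k\alpha}-\lambda$ yields
$$\Delta_k(t_0)\ \ge\ \frac{|\lambda|\,m\bigl(1-E_{\rho,1}(-\lambda_k t_0^\rho)\bigr)}{\lambda_k}.$$

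The remaining, and main, step is a uniform lower bound $1-E_{\rho,1}(-\lambda_k t_0^\rho)\ge c>0$ independent of $k$. Lemma~\ref{lem2.2} gives $E_{\rho,1}(-\lambda_k t_0^\rho)\le C_0/(1+\lambda_k t_0^\rho)\to 0$ as $\lambda_k\to\infty$, so there exists $k_*$ with $E_{\rho,1}(-\lambda_k t_0^\rho)\le 1/2$ for $k\ge k_*$; for the finitely many indices $k<k_*$, Lemma~\ref{lem2.3} gives $E_{\rho,1}(-\lambda_k t_0^\rho)<1$ and hence a strictly positive minimum of $1-E_{\rho,1}(-\lambda_k t_0^\rho)$ over this finite set. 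Taking $c$ to be the smaller of $1/2$ and this minimum, I obtain the claimed estimate
$$\Delta_k(t_0)\ \ge\ \frac{|\lambda|\,m\,c}{\lambda_k}=\frac{C}{\lambda_k},$$
with $C=C(t_0,\alpha,\rho,\lambda,g)>0$. I do not anticipate any subtle step beyond the splitting into finitely many small $k$ and asymptotically large $k$ just described; this is the only place where a non-algebraic argument is needed.
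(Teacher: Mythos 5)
Your proof is correct and takes essentially the same route as the paper: bound the second summand below by $m\,t_0^{\rho}E_{\rho,\rho+1}(-\lambda_k t_0^{\rho})=m\bigl(1-E_{\rho}(-\lambda_k t_0^{\rho})\bigr)/\lambda_k$ via Lemmas~\ref{lem2.4} and~\ref{lem2.5}, use $e^{-\lambda_k\alpha}-\lambda\ge|\lambda|>0$, and discard the nonnegative first summand. The only cosmetic difference is the final uniformity step: the paper obtains $1-E_{\rho}(-\lambda_k t_0^{\rho})\ge 1-E_{\rho}(-\lambda_1 t_0^{\rho})>0$ in one line from the monotonicity in Lemma~\ref{lem2.3}, whereas you split into finitely many small $k$ and large $k$ via Lemma~\ref{lem2.2}; both are valid.
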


\begin{proof}\vspace{-0.5em}
For $t_{0}\in (0, \beta]$, we have (see Lemma \ref{lem2.4})
$$I_{k,\rho}(t_0) \geq m\int\limits _0^{t_0} s^{\rho-1} E_{\rho, \rho} (-\lambda_k  s^\rho)ds =m t_{0} ^\rho E_{\rho, \rho+1} (-\lambda_k t_{0}^\rho ).$$
Taking into account (see Lemma \ref{lem2.5}) 
$$E_{\rho, \rho+1}(-t)=t^{-1} (1- E_{\rho} (-t)),$$
we obtain
$$I_{k,\rho}(t_{0})\geq \frac{1}{\lambda_k} (1- E_{\rho} (-\lambda_k t_{0}^\rho))m\geq \frac{1}{\lambda_k} (1- E_{\rho} (-\lambda_1 t_{0}^\rho))m\geq  \frac{C_{t_0}}{\lambda_k}, \,\, C_{t_{0}}>0,$$
$$I_{k}(\alpha)\geq m\int\limits_{-\alpha}^0 e^{\lambda_k(-\alpha- s)} ds=m\frac{1-e^{-\lambda_k\alpha}}{\lambda_{k}} \geq  \frac{C_{\alpha}}{\lambda_k}, \,\, C_{\alpha}>0.$$   
Therefore
$$\Delta_k (t_0)\geq {E_{\rho,1}(-\lambda_k  t_0^\rho)\frac{C_{\alpha}}{\lambda_k}}+{(e^{-\lambda_k\alpha}-\lambda)\frac{C_{t_0}}{\lambda_k}}\geq {(e^{-\lambda_k\alpha}-\lambda)\frac{C_{t_0}}{\lambda_k}},$$
which implies the desired assertion due to  $\lambda<0$. Lemma \ref{lem7.1} is proved.
\end{proof}

\begin{lem}\label{lem7.2}
Let  $\lambda\ge 1$, $g(t)\in C[-\alpha,\beta]$ and $g(t)\neq 0$, $t\in [-\alpha,\beta]$. 

If number $t_{0}$ satisfies following condition 
  \begin{equation}\label{N1}
   t^{\rho}_{0}>\frac{C_0}{\lambda_{1}}\left(1+\frac{M}{m}\right),   
  \end{equation}
where $C_0$ is the number in the Lemma \ref{lem2.2} then, there is  constant $C>0$ depending on $t_0$, $\rho$ and $\alpha$, such that for all  $k$:
\begin{equation}\label{est2}
  	|\Delta_k (t_0)|\ge \frac{C}{\lambda_k}.  
\end{equation}

If the number $t_{0}$ does not satisfy condition \eqref{N1}, then there exist  a number $k_l$, $l\in\mathbb{N}$  such that the  estimate \eqref{est2} holds for all $k > k_l$.
\end{lem}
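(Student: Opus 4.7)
The plan is to mirror the argument of Lemma~\ref{lem7.1}, but now the sign of $\delta_k=e^{-\lambda_k\alpha}-\lambda$ flips (it is strictly negative because $\lambda\ge 1$), so the two contributions to $\Delta_k(t_0)$ point in opposite directions. The whole issue becomes showing that the contribution $(\lambda-e^{-\lambda_k\alpha})I_{k,\rho}(t_0)$ dominates $E_\rho(-\lambda_k t_0^\rho)I_k(\alpha)$ uniformly in $k$, under the hypothesized threshold on $t_0$. As in the previous lemma I would work under the assumption $g>0$ without loss of generality.

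First I would recycle the identity $t^\rho E_{\rho,\rho+1}(-\lambda_k t^\rho)=\lambda_k^{-1}(1-E_\rho(-\lambda_k t^\rho))$ already used in Lemma~\ref{lem7.1} (via Lemmas~\ref{lem2.4} and \ref{lem2.5}), together with the elementary exponential integral, to derive the two-sided bounds
$$I_{k,\rho}(t_0)\ge\frac{m}{\lambda_k}\bigl(1-E_\rho(-\lambda_k t_0^\rho)\bigr),\qquad I_k(\alpha)\le\frac{M}{\lambda_k}\bigl(1-e^{-\lambda_k\alpha}\bigr).$$
Since $\lambda\ge 1$ gives $\lambda-e^{-\lambda_k\alpha}\ge 1-e^{-\lambda_k\alpha}$, combining these would produce
$$|\Delta_k(t_0)|\ge\frac{1-e^{-\lambda_k\alpha}}{\lambda_k}\Bigl[m-(m+M)E_\rho(-\lambda_k t_0^\rho)\Bigr].$$
The prefactor is already bounded below by the fixed positive number $1-e^{-\lambda_1\alpha}$, so everything reduces to a uniform lower bound on the bracket.

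For that step I would invoke Lemma~\ref{lem2.2} in the form $E_\rho(-\lambda_k t_0^\rho)\le C_0/(1+\lambda_k t_0^\rho)$ and use $\lambda_k\ge\lambda_1$; under the hypothesis $\lambda_1 t_0^\rho>C_0(1+M/m)$ a short algebraic simplification then yields
$$m-(m+M)E_\rho(-\lambda_k t_0^\rho)\ge\frac{m^2}{m+C_0(m+M)}>0,$$
giving $|\Delta_k(t_0)|\ge C/\lambda_k$ with $C$ depending only on $\rho$, $\alpha$, $t_0$ and $g$. If instead $t_0$ fails the threshold, then since $\lambda_k\to\infty$ one can choose $k_l$ so that $\lambda_k t_0^\rho>C_0(1+M/m)$ for every $k>k_l$, and the identical calculation gives the same bound for those indices.

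The main obstacle is not the derivation of the intermediate estimates --- all the tools are already available in Section~2 --- but verifying that the bracket $m-(m+M)E_\rho(-\lambda_k t_0^\rho)$ is genuinely bounded away from zero uniformly in $k$. The threshold $C_0(1+M/m)/\lambda_1$ has been tailored so that this algebraic step yields a strictly positive constant rather than one that collapses to zero; this is the delicate bookkeeping I expect to consume most of the care, and it is precisely why the explicit constant $C_0$ from Lemma~\ref{lem2.2} must appear inside the hypothesis on $t_0$.
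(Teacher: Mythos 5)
Your argument is correct and follows essentially the same route as the paper's proof: bound $I_k(\alpha)$ and $I_{k,\rho}(t_0)$ via Lemmas \ref{lem2.4}--\ref{lem2.5}, use $\lambda-e^{-\lambda_k\alpha}\ge 1-e^{-\lambda_k\alpha}$, and apply the decay estimate of Lemma \ref{lem2.2} so that the threshold \eqref{N1} (or, failing that, $\lambda_k\to\infty$ for $k>k_l$) makes the bracket strictly positive, yielding $|\Delta_k(t_0)|\ge C/\lambda_k$. The only difference is cosmetic: you work directly with $-\Delta_k(t_0)$ and exhibit the explicit constant, whereas the paper additionally records a (logically unnecessary) two-sided bound $\Delta_k(t_0)\ge -M/\lambda_k$ before concluding.
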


\begin{proof}\vspace{-0.5em}
We begin by estimating $\Delta_k(t_0)$ from below. From its definition, it consists of a sum of two integrals. For the first integral, we have:
\begin{equation}\label{min}
 \int_{-\alpha}^0 g(s) e^{\lambda_k(-\alpha - s)}\, ds \geq m  \frac{1 - e^{-\lambda_k \alpha}}{\lambda_k}, \end{equation}
For the second integral, using Lemma \ref{lem2.4} and Lemma \ref{lem2.5} again, we get
\[
\int_0^{t_0} s^{\rho - 1} E_{\rho,\rho}(-\lambda_k s^\rho) g(t_0 - s)\, ds \leq M  \frac{1 - E_{\rho,1}(-\lambda_k t_0^\rho)}{\lambda_k}.
\]
Hence,
\[
\Delta_k(t_0) \geq \frac{E_{\rho,1}(-\lambda_k t_0^\rho)}{\lambda_k} \left[ m (1 - e^{-\lambda_k \alpha}) + (\lambda - e^{-\lambda_k \alpha}) M \right] - \frac{M}{\lambda_k},
\]
which implies
$$\Delta_k(t_0) \geq -\frac{M}{\lambda_k},$$
where $C_1 = M$.

Next, to estimate $\Delta_k(t_0)$ from above, we write:
\[
\Delta_k(t_0) \leq M  \frac{1 - e^{-\lambda_k \alpha}}{\lambda_k}  E_{\rho,1}(-\lambda_k t_0^\rho) - m  (\lambda - e^{-\lambda_k \alpha})  \frac{1 - E_{\rho,1}(-\lambda_k t_0^\rho)}{\lambda_k}.
\]
Using the Lemma \ref{lem2.2}  we obtain:
\begin{equation}\label{N2}
 \Delta_k(t_0) \leq \left( \frac{1 - e^{-\lambda_k \alpha}}{\lambda_k} \right) \left( \frac{C_{0}(M + m)}{\lambda_k t_0^\rho} - m \right).   
\end{equation}
Note that the expression in parentheses becomes negative under the assumption:
\[
t_0^\rho > \frac{C_0}{\lambda_1} \left(1 + \frac{M}{m} \right).
\]
Thus, for all $k \in \mathbb{N}$, we have:
\[
\Delta_k(t_0) \leq -\frac{C_2}{\lambda_k},
\]
where $C_{2} =\left(\frac{M + m}{\lambda_1 t_0^\rho} - m\right)>0$. 

Hence, there exists a constant $C=\min\{C_{1}, C_{2}\}$ such that the required lower bound holds. 

Now let $\lambda\ge 1$ and  assume that, condition (\ref{N1}) not be satisfied for the given values of the parametr. However, there exists an index $k_{l}$, such that, for all $k>k_l$ the condition  $t_0^{\rho} > \frac{C_0}{\lambda_k}\left(1 + \frac{M}{m}\right)$ is satisfied, since  $\frac{C_0}{\lambda_k}\left(\frac{M + m}{m}\right) \to 0$  as $ k \to \infty,$ (see \eqref{N2}). Therefore, for all $k>k_l$ the estimate \eqref{est2} holds. Lemma \ref{lem7.2} is proved.
\end{proof}

\begin{lem}\label{lem7.5}
 Let $0 < \lambda < 1$, $g(t)\in C[-\alpha,\beta]$ and $g(t)\neq 0$, $t\in [-\alpha,\beta]$. Then for all $k> k_r$, $r\in\mathbb{N}$ the following estimate  
 \begin{equation}\label{est53}
   | \Delta_{k,\rho}(t_0)|\ge \frac{C}{\lambda_k},  
 \end{equation}
 is valid, where constant $C>0$ depend on $\rho$,  $t_0$ and $\alpha$.
\end{lem}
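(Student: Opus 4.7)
The plan is to exploit the regime $k$ large, where $\lambda_k \to \infty$ forces $e^{-\lambda_k\alpha} \to 0$, so that the sign structure of
$$
\Delta_k(t_0) = E_{\rho,1}(-\lambda_k t_0^\rho)\, I_k(\alpha) + (e^{-\lambda_k\alpha} - \lambda)\, I_{k,\rho}(t_0)
$$
becomes essentially $(-\lambda)\, I_{k,\rho}(t_0)$ plus vanishing perturbations. Without loss of generality (as the authors observe before the statement) I may assume $g(t) > 0$, with $m$ and $M$ defined by \eqref{minmax}. Since $e^{-\lambda_k\alpha} \to 0$, there is an index $k_1$ such that $e^{-\lambda_k\alpha} < \lambda/2$ whenever $k > k_1$, and hence $e^{-\lambda_k\alpha} - \lambda < -\lambda/2 < 0$ on that range.

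Next, I would obtain a lower bound on $I_{k,\rho}(t_0)$ by mimicking the argument of Lemma \ref{lem7.1}: using $g \geq m$ together with Lemma \ref{lem2.4} and Lemma \ref{lem2.5},
$$
I_{k,\rho}(t_0) \geq m\, t_0^\rho E_{\rho,\rho+1}(-\lambda_k t_0^\rho) = \frac{m}{\lambda_k}\bigl(1 - E_{\rho,1}(-\lambda_k t_0^\rho)\bigr).
$$
Invoking Lemma \ref{lem2.2} one has $E_{\rho,1}(-\lambda_k t_0^\rho) \leq C_0/(1+\lambda_k t_0^\rho) \to 0$, so there exists $k_2$ with $E_{\rho,1}(-\lambda_k t_0^\rho) < 1/2$ for $k > k_2$. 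Combining, $I_{k,\rho}(t_0) \geq m/(2\lambda_k)$ for $k > k_2$, and therefore for $k > \max(k_1,k_2)$ the second term of $\Delta_k(t_0)$ satisfies
$$
(e^{-\lambda_k\alpha} - \lambda)\, I_{k,\rho}(t_0) \leq -\frac{\lambda m}{4\lambda_k}.
$$

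For the first term I would estimate it in absolute value, using $|g| \leq M$ and Lemma \ref{lem2.2}:
$$
\bigl|E_{\rho,1}(-\lambda_k t_0^\rho)\, I_k(\alpha)\bigr| \leq \frac{C_0 M}{\lambda_k\,(1 + \lambda_k t_0^\rho)},
$$
which is of order $1/\lambda_k^2$. Choosing $k_r \geq \max(k_1,k_2)$ large enough that $C_0 M/(1+\lambda_k t_0^\rho) \leq \lambda m/8$ for all $k > k_r$, the first term contributes at most $\lambda m/(8\lambda_k)$ in modulus, so
$$
\Delta_k(t_0) \leq \frac{\lambda m}{8\lambda_k} - \frac{\lambda m}{4\lambda_k} = -\frac{\lambda m}{8\lambda_k},
$$
which yields $|\Delta_k(t_0)| \geq C/\lambda_k$ with $C = \lambda m/8$, a constant depending only on $\rho$, $t_0$, $\alpha$ (through $m$, $M$, and the thresholds). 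The only delicate point is bookkeeping for the three thresholds and, conceptually, the fact that the sign of the leading factor $(e^{-\lambda_k\alpha}-\lambda)$ stabilizes only asymptotically (in contrast with Lemma \ref{lem7.2}, where $\lambda \geq 1$ makes this factor uniformly of one sign); this is precisely why the estimate is asserted only for $k > k_r$, and the finitely many exceptional indices $k \leq k_r$ for which $\Delta_k(t_0)$ may vanish will presumably be absorbed by an orthogonality condition on $\varphi_0$ in the solvability theorem that follows, by analogy with \eqref{orto} and Theorem \ref{th5.2}.
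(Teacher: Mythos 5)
Your argument is correct and follows essentially the same route as the paper: for large $k$ the factor $e^{-\lambda_k\alpha}-\lambda$ becomes negative, the term $(e^{-\lambda_k\alpha}-\lambda)I_{k,\rho}(t_0)$ is bounded above by a negative quantity of order $\lambda_k^{-1}$ via Lemmas \ref{lem2.4}, \ref{lem2.5}, \ref{lem2.2}, while the term $E_{\rho,1}(-\lambda_k t_0^{\rho})I_k(\alpha)$ is of order $\lambda_k^{-2}$ and is absorbed for $k>k_r$. The only (immaterial) difference is that the paper also treats the finitely many indices with $\lambda_k<\lambda_0$ as a separate case, which is not needed for the estimate claimed only for $k>k_r$.
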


\begin{proof}\vspace{-0.5em}

Since $\delta_k \neq 0$, it follows that $\lambda_k \neq \lambda_0$ for all $k$. Therefore, we consider only the following two cases.

Case 1. Let $\lambda_k < \lambda_0$.
In this case, based on the proof of Lemma \ref{lem7.1}, it is not difficult to see that for all $k < k_0$, the following estimate holds:
$$ \Delta_{k,\rho}(t_0)>c_{0},$$
where $c_0 > 0$ is a constant depending on $\alpha$, $t_0$, and $\rho$.

Case 2. Let $\lambda_{k}>\lambda_{0}$. We prove this  case of lemma  similarly to the proofs of the previous lemmas. The lower bound of $\Delta_k(t_0)$ has the form ( see Lemma \ref{lem7.2}) 
$$\Delta_k(t_0)\ge -\frac{C_{1}}{\lambda_{k}}.$$

Now, we establish an upper bound for $\Delta_k(t_0)$. To this end, using  Lemma \ref{lem2.4}, Lemma \ref{lem2.5} and Lemma \ref{lem2.2}, we obtain :
$$\Delta_k(t_0) \le \frac{C_0}{\lambda_k^2 t_0^\rho} \left( M(1 - e^{-\lambda_k \alpha}) + (\lambda - e^{-\lambda_k \alpha}) m \right) - \frac{(\lambda - e^{-\lambda_k \alpha}) m}{\lambda_k}.$$
Thus, for all $k>k_r$, we have
$$\Delta_k(t_0) \leq -\frac{C_3}{\lambda_k},$$
where $C_{3} =\left(\lambda-e^{-\lambda_{k}\alpha}\right)m >0$.

Therefore, there exists a constant $C=\min\{c_{0}, C_{1}, C_{3}\}$ such that, for all $k> k_r$ the required lower bound holds. The lemma \ref{lem7.5} is proved.
\end{proof}

The above estimates \eqref{est2} and \eqref{est53}  show that there exists an index $k_1$ such that $\Delta_k(t_0)\ne 0$ for all $k > k_1=\max\{k_l, k_r\}$. However, in some cases, it may happen that $\Delta_{k}(t_0)=0$ for $k<k_{1}$. Naturally, the question may arise as to how the indices $k_l$ and $k_r$ can be determined.

For example, according to the proof of the second condition of Lemma \ref{lem7.2}, the index $k_l$ is given by
$$k_l = \min\left\{k : t_0^{\rho} > \frac{1}{\lambda_k}\left(1 + \frac{M}{m}\right)\right\}.$$
According to the proof of the second condition of Lemma \ref{lem7.5}, the index $k_r$ is given by
$$k_r = \min\left\{k : \frac{(\lambda - e^{-\lambda_k \alpha}) m}{\lambda_k}>\frac{C_0}{\lambda_k^2 t_0^\rho} \left( M(1 - e^{-\lambda_k \alpha}) + (\lambda - e^{-\lambda_k \alpha}) m \right)\right\}.$$
Hence, we introduce the set
$$\mathbb{K}_0 = \{k \in \mathbb{N} : \Delta_k(t_{0}) = 0\}.$$
\begin{remark}\label{deltaDelta}
Note, that if $k\in \mathbb{K}_0$, then obviously $\delta_k \neq 0$.
\end{remark}

\begin{lem}\label{lem7.3}
The set $\mathbb{K}_0$ is either empty or contains only finitely many elements.
\end{lem}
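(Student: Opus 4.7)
The plan is to prove the statement by a case analysis on the parameter $\lambda$, invoking the three lower bounds already established in Lemma \ref{lem7.1}, Lemma \ref{lem7.2}, and Lemma \ref{lem7.5}. In each case the goal is the same: show that $|\Delta_k(t_0)|$ is bounded below by a strictly positive quantity for all sufficiently large $k$, from which it follows that only finitely many indices can satisfy $\Delta_k(t_0)=0$.

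First I would dispose of the case $\lambda<0$ using Lemma \ref{lem7.1}. There the bound $\Delta_k(t_0)\ge C/\lambda_k>0$ holds for every $k\in\mathbb{N}$, so in fact $\mathbb{K}_0=\emptyset$. Next, for $\lambda\ge 1$, I would apply Lemma \ref{lem7.2}: if the geometric condition \eqref{N1} is satisfied by $t_0$, the estimate $|\Delta_k(t_0)|\ge C/\lambda_k$ holds for every $k$, so again $\mathbb{K}_0=\emptyset$; if \eqref{N1} fails, the second part of the lemma yields an index $k_l$ beyond which the same lower bound holds, so $\mathbb{K}_0\subseteq\{1,2,\dots,k_l\}$ is finite. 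Finally, for $0<\lambda<1$, Lemma \ref{lem7.5} supplies an index $k_r$ such that $|\Delta_k(t_0)|\ge C/\lambda_k$ for all $k>k_r$; hence $\mathbb{K}_0\subseteq\{1,2,\dots,k_r\}$ is finite.

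Since the three cases exhaust all admissible values of $\lambda$ (we have already assumed $\delta_k\ne 0$, so the singular value $\lambda=e^{-\lambda_k\alpha}$ is excluded), in every case $\mathbb{K}_0$ is either empty or contained in a finite initial segment of $\mathbb{N}$, which proves the claim. The argument is essentially bookkeeping on top of the already-proved lemmas; the only subtlety I expect is to make sure one does not double-count the hypothesis $\delta_k\ne 0$ (it rules out the degenerate situation $\lambda_k=\lambda_0$ in Lemma \ref{lem7.5} but plays no role in the other cases), which is exactly what Remark \ref{deltaDelta} flags. No new estimates are needed, so the principal obstacle is merely presenting the case split cleanly.
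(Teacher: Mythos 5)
Your argument is correct and follows essentially the same route as the paper: finiteness of $\mathbb{K}_0$ is deduced from the tail lower bounds $|\Delta_k(t_0)|\ge C/\lambda_k$ for $k>k_l$ (Lemma \ref{lem7.2}) and $k>k_r$ (Lemma \ref{lem7.5}), so any vanishing index lies in a finite initial segment. Your explicit treatment of the case $\lambda<0$ via Lemma \ref{lem7.1} (giving $\mathbb{K}_0=\emptyset$) is only a minor elaboration of what the paper leaves implicit.
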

\begin{proof}\vspace{-0.5em}
From the proof of Lemma \ref{lem7.2}, it follows that if there exists an index $k \in \mathbb{K}_0$, then necessarily $k\le k_l$. Therefore, $\mathbb{K}_0$ is a finite set.
Moreover, as mentioned in Section 1, the sequence $\{\lambda_k\}$ consists of discrete values. Hence, $\Delta_k(t_{0})$ can vanish only at isolated indices, and it is possible that no such index exists. In this case, the set $\mathbb{K}_0$ is empty. A similar argument is valid for the elements of the set $\mathbb{K}_0$ when $k \leq k_r$. This completes the proof of Lemma \ref{lem7.3}.
\end{proof}

\begin{thm}\label{th7.1}
Let  $g(t)\in C[-\alpha,\beta]$ and  $g(t)\neq 0$, $t\in [-\alpha,\beta]$. Let $\lambda<0$. Then there exists a unique solution of the inverse problem (\ref{prob1.1})-(\ref{um8}) and it can be represented as:
\begin{equation}\label{tyech1}
\begin{array}{l} {u(x,t)=\sum_{k=1}^{\infty }\left(\frac{\varphi_{0k}}{\Delta_{k}(t_0)}E_{\rho,1}(\lambda_{k}t^{\rho})\int\limits_{-\alpha}^0  g(s) e^{\lambda_k(-\alpha- s)} ds\right)v_{k}(x)+} \\\\ 

{+\sum _{k=1}^{\infty }\left( \frac{\varphi_{0k}(e^{-\lambda_{k}\alpha}-\lambda)}{\Delta_{k}(t_0)}\int\limits_{0}^{t}s^{\rho-1}E_{\rho,\rho}\left(-\lambda_{k}s^{\rho}\right)g(t-s)\,ds\right)v_{k}(x),\, \, \, t>0,\,} \end{array}
\end{equation}
$$u(x,t)=\sum _{k=1}^{\infty }\left(\frac{\varphi_{0k}}{\Delta_{k}(t_0)}e^{\lambda_{k}t}\int\limits_{-\alpha}^0  g(s) e^{\lambda_k(-\alpha- s)} ds- \frac{\varphi_{0k}(e^{-\lambda_{k}\alpha}-\lambda)}{\Delta_{k}(t_0)}\int\limits_{t}^0 g(s) e^{\lambda_k(t-s)} ds\right)v_{k}(x), \quad t<0. $$
\begin{equation}\label{um36}
 f(x)=\sum _{k=1}^{\infty }\frac{\varphi_{0k}(e^{-\lambda_{k}\alpha}-\lambda)}{\Delta_{k}(t_0)}v_{k}(x).   
\end{equation}
\end{thm}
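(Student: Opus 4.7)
The plan is to split the proof into uniqueness and existence. Since $\lambda<0$, Lemma \ref{lem4.1} gives $|\delta_k|\ge |\lambda|+e^{-\lambda_1\alpha}>0$ uniformly in $k$, while Lemma \ref{lem7.1} supplies the decisive lower bound $\Delta_k(t_0)\ge C/\lambda_k>0$ for every $k\in\mathbb{N}$. In particular, $\Delta_k(t_0)\ne 0$ for all $k$, so the uniqueness criterion of Theorem \ref{th6.1} applies directly and yields uniqueness of the pair $\{u,f\}$ at once.

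For existence, I would take the series \eqref{tyech1} and \eqref{um36} as the candidate solution: the formal identity $f_k=\delta_k\varphi_{0k}/\Delta_k(t_0)$ was already derived in Section 5, and substituting it into \eqref{yech2} (together with its $t<0$ analogue obtained via the gluing and non-local conditions) produces exactly the representation in \eqref{tyech1}. What remains is to verify that these series satisfy the regularity requirements of Definition \ref{def1} together with $f\in C(\overline\Omega)$. The key quantitative input is that, combining $|\delta_k|$ bounded with $\Delta_k(t_0)\ge C/\lambda_k$, one obtains $|f_k|\le C\lambda_k|\varphi_{0k}|$, and consequently $|a_{1k}|\le C|\varphi_{0k}|$ as well, since $|I_k(\alpha)|\le M\alpha$.

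Next I would mimic the proof of Theorem \ref{th5.1} term by term. For $t>0$ the solution splits into the non-local-data part $S_1$ and the convolution-with-$g$ part $S_2$; applying $-\Delta$ and invoking Lemma \ref{lem2.1} reduces matters to bounding $L_2$-series of the form $\sum \lambda_k^{2(\sigma+1)}|T_k(t)|^2$ with $\sigma>N/4$. The temporal factors are then handled as before: Lemma \ref{lem2.2} controls $E_{\rho,1}(-\lambda_k t^\rho)$, Lemma \ref{lem2.6} controls the Mittag--Leffler kernel in the Duhamel integral, and the generalized Minkowski inequality absorbs the convolution against the bounded function $g$. For $t<0$ the exponential kernel $e^{\lambda_k t}$ is uniformly bounded on $[-\alpha,0]$, making the estimates strictly easier. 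The expression \eqref{um36} for $f$ is controlled in $C(\overline\Omega)$ by a single application of Lemma \ref{lem2.1} combined with the bound $|f_k|\le C\lambda_k|\varphi_{0k}|$.

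The main obstacle will be absorbing the extra factor $\lambda_k$ introduced by $1/\Delta_k(t_0)$: every Parseval-type estimate now carries an additional weight $\lambda_k^2$ in its summands compared to the forward problem. To close the argument one therefore must impose on $\varphi_0$ a smoothness hypothesis stronger than what condition \eqref{1.28} required of $F$ in Theorem \ref{th5.1}, namely one additional derivative order in the Sobolev scale, applied through Il'in's lemma exactly as in Section 2. Once this regularity is granted, the boundary condition \eqref{prob1.2}, the gluing \eqref{prob1.3}, the non-local relation \eqref{prob1.4} and the over-determination \eqref{um8} are satisfied by construction, since they were each built into the definitions of the coefficients $f_k$ and $a_{1k}$; absolute continuity in $t$ at $t\ge 0$ is inherited from the termwise absolute continuity of the Mittag--Leffler building blocks.
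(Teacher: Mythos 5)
Your proposal is correct and follows essentially the same route as the paper: uniqueness from Theorem \ref{th6.1} once Lemma \ref{lem7.1} guarantees $\Delta_k(t_0)\ge C/\lambda_k>0$, and existence by inserting $f_k=\delta_k\varphi_{0k}/\Delta_k(t_0)$ into the forward-problem representation and repeating the $\hat{A}^{-\sigma}$/Lemma \ref{lem2.1}/Parseval/Mittag--Leffler machinery of Theorem \ref{th5.1}. The genuine difference is your treatment of the regularity of $\varphi_0$, and there you are in fact more careful than the paper. Since $\Delta_k(t_0)\asymp\lambda_k^{-1}$ for $\lambda<0$, one really has $f_k\asymp\lambda_k\varphi_{0k}$, so the estimate of $\|f\|_{C(\Omega)}$ and of the second series $-\Delta I_2$ carry the weight $\lambda_k^{2\sigma+2}$, not $\lambda_k^{2\sigma}$ as recorded in \eqref{N62} and in the paper's bound for \eqref{um36} (the extra $\lambda_k$ from $1/\Delta_k(t_0)$ is compensated only in the first series \eqref{N61}, where it is absorbed by the decay of $E_{\rho,1}(-\lambda_k t^\rho)$ together with $I_k(\alpha)\le M/\lambda_k$); consequently the paper's claim that \eqref{1.28} on $\varphi_0$ suffices is not justified by its own estimates, whereas your requirement of one additional derivative order in Il'in's scale (equivalently $\varphi_0\in D(\hat{A}^{\sigma+1})$ for some $\sigma>N/4$) is exactly what closes the argument. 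Two small remarks: your claim that every Parseval estimate gains $\lambda_k^2$ is slightly overbroad (the $I_1$-part does not, as \eqref{N61} shows), and your justification of $|a_{1k}|\le C|\varphi_{0k}|$ via $|I_k(\alpha)|\le M\alpha$ only yields $|a_{1k}|\le C\lambda_k|\varphi_{0k}|$; to get the stated bound use $|I_k(\alpha)|\le M/\lambda_k$ (or the ratio $I_k(\alpha)/\Delta_k(t_0)\le C$ coming from $\Delta_k(t_0)\ge\delta_k I_{k,\rho}(t_0)\ge C/\lambda_k$), which is also what the paper implicitly uses. Neither remark affects the validity of your overall argument.
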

\begin{proof}\vspace{-0.5em}
We write the series (\ref{tyech1}) as sums of two series: $I_{1}(x,t)$ and $I_{2}(x,t)$. If $I^j_{1}(x,t)$ and $I^j_{2}(x,t)$ are the corresponding partial sums, then we have:
\begin{equation}\nonumber
 -\Delta I^{j}_{1} (x,t)=\sum_{k=1}^{j}\left(\frac{\lambda_{k}\varphi_{0k}}{\Delta_{k}(t_0)}E_{\rho,1}(\lambda_{k}t^{\rho})\int\limits_{-\alpha}^0  g(s) e^{\lambda_k(-\alpha- s)} ds\right)v_{k}(x) ,  
\end{equation}

\begin{equation}\nonumber
 -\Delta I^{j}_{2} (x,t)=\sum _{k=1}^{j}\left( \frac{\lambda_{k}\varphi_{0k}(e^{-\lambda_{k}\alpha}-\lambda)}{\Delta_{k}(t_0)}\int\limits_{0}^{t}s^{\rho-1}E_{\rho,\rho}\left(-\lambda_{k}s^{\rho}\right)g(t-s)\,ds\right)v_{k}(x).
\end{equation}
Next, applying the identity $\hat{A}^{-\sigma} v_{k}(x) = \lambda_{k}^{-\sigma} v_{k}(x)$ and using Lemma \ref{lem2.1}, Lemma \ref{lem2.2} and by applying Parseval's equality, we obtain:
$$\left\| -\Delta I^{j}_{1} (x,t)\right\| _{C(\Omega )}^{2} \le C\left\| \sum_{k=1}^{j }\left(\frac{\lambda^{\sigma+1}_{k}\varphi_{0k}}{\Delta_{k}(t_0)}E_{\rho,1}(\lambda_{k}t^{\rho})\int\limits_{-\alpha}^0  g(s) e^{\lambda_k(-\alpha- s)} ds\right)v_{k}(x)   \right\| _{L_{2}(\Omega )}^{2} $$
\begin{equation}\label{N61}
 \le \frac{MC t^{-2\rho}}{\lambda_1}  \sum\limits_{k=1}^{j}|\varphi_{0k}|^{2}\lambda _{k}^{2\sigma}, \quad \tau=2\sigma>\frac{N}{2}.   
\end{equation}
By the Lemma \ref{lem2.4} and Lemma \ref{lem2.2} we have 
 $$\left\| -\Delta I^{j}_{2} (x,t)\right\| _{C(\Omega )}^{2} \le C\left\| \sum _{k=1}^{j}\lambda^{\sigma+1}_{k}\left( \frac{\varphi_{0k}(e^{-\lambda_{k}\alpha}-\lambda)}{\Delta_{k}(t_0)}\int\limits_{0}^{t}s^{\rho-1}E_{\rho,\rho}\left(-\lambda_{k}s^{\rho}\right)g(t-s)\,ds\right)v_{k}(x)  \right\| _{L_{2}(\Omega )}^{2} $$
 \begin{equation}\label{N62}
   \le\frac{MC}{\lambda_1} \left(\sum \limits_{k=1}^{j}\lambda _{k}^{2\sigma } \left|\varphi_{0k}\right|^{2}  \right) , \quad \tau=2\sigma>\frac{N}{2}.  
 \end{equation}
 
 It is easy to see that
\begin{equation}\nonumber
\left\| f(x)\right\| _{C(\Omega )}^{2}\le C\sum\limits_{k=1}^{j} \lambda^{2\sigma}\left|\varphi_{0k}\right|^{2}, 
 \quad \tau=2\sigma>\frac{N}{2}.  
\end{equation}
Therefore, if the function $\varphi_{0}(x)$ satisfies the conditions \eqref{1.28}, then the following estimates hold:
\[
\left\| -\Delta I^{j}_{1}(x,t) \right\|_{C({\Omega})}^{2} \leq C, \quad
\left\| -\Delta I^{j}_{2}(x,t) \right\|_{C(\Omega)}^{2} \leq C, \quad
\left\| f(x) \right\|_{C(\Omega)}^{2} \leq C, \quad t > 0.
\]

Thus, we conclude that $\Delta u(x,t) \in C(\overline{\Omega} \times (0,\beta])$. In particular, $u(x,t) \in C(\overline{\Omega} \times [0,\beta])$, and $f(x) \in C(\overline{\Omega})$. Theorem \ref{th7.1} is proved.
\end{proof}

\begin{thm}\label{th7.2}
Let $g(t)\in C[-\alpha,\beta]$ and $g(t)\neq 0$, $t\in [-\alpha,\beta]$ and let $\delta_k\neq 0$ for all $k$. Moreover, let the assumptions of Lemma \ref{lem7.2} or Lemma \ref{lem7.5} hold.

1) If set $\mathbb{K}_{0}$ is empty, then there exists a unique solution of the inverse problem (\ref{prob1.1})-(\ref{um8}) and it can be represented as the series in Theorem \ref{th7.1}.

2) If set $\mathbb{K}_{0}$ is not empty, then for the existence of a solution to the inverse problem (\ref{prob1.1})-(\ref{um8}), it is necessary and  sufficient that the following  conditions
$$\varphi_{0k}=(\varphi_0, v_k)=0,\,\, k\in \mathbb{K}_{0},$$
be satisfied. In this case, the solution to the inverse problem (\ref{prob1.1})-(\ref{um8}) exists, but is not unique:
\begin{equation}\label{um44}
 f(x)=\sum\limits_{k\notin\mathbb{K}_0} \frac{\delta_k\varphi_{0k}}{\Delta_{k}(t_0)}v_k(x)+\sum\limits_{k \in \mathbb{K}_{0}} f_k v_k(x),   
\end{equation}
\begin{equation}\label{um45}
 u(x,t)= \sum\limits _{k=1}^{\infty }f_{k}\left(\frac{\int\limits_{-\alpha}^0  g(s) e^{\lambda_k(-\alpha- s)} ds}{\delta_k}E_{\rho,1}(-\lambda_{k}t^{\rho})+\int\limits_{0}^{t}s^{\rho-1}E_{\rho,\rho}\left(-\lambda_{k}s^{\rho}\right)g(t-s)\,ds\right)v_{k}(x),\quad t>0,  
\end{equation}
\begin{equation}\label{um46}
u(x,t)=\sum\limits _{k=1}^{\infty }f_k\left(\frac{\int\limits_{-\alpha}^0  g(s) e^{\lambda_k(-\alpha- s)} ds}{\delta_k} e^{\lambda_k t}-\int\limits_{t}^0 g(s) e^{\lambda_k(t-s)} ds\right)v_{k}(x), \quad t<0,
\end{equation}
where if $k\notin \mathbb{K}_{0}$ then $f_k$ has the form (\ref{N16}) and if $k\in \mathbb{K}_{0}$, then $f_{k}$ are arbitrary real numbers.
\end{thm}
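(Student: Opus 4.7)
The plan is to build on the argument used in Theorem \ref{th7.1}, adapting it to the fact that the lower bounds from Lemma \ref{lem7.2} and Lemma \ref{lem7.5} on $|\Delta_k(t_0)|$ are only guaranteed for $k$ past some threshold $k_1=\max\{k_l,k_r\}$. By Lemma \ref{lem7.3}, only finitely many indices can be ``bad'', so in both parts of the theorem the sums are decomposed into a finite head (indices $k\le k_1$) and an infinite tail (indices $k>k_1$), and the analysis reduces to what is already in place.

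For part 1, the hypothesis $\mathbb{K}_0=\emptyset$ means $\Delta_k(t_0)\ne 0$ for every $k\in\mathbb{N}$, so uniqueness is immediate from Theorem \ref{th6.1}. For existence I would take the representation (\ref{tyech1}), (\ref{um36}) and split the sums accordingly. The head is a finite linear combination of smooth eigenfunctions with finite nonzero denominators, and hence it lies in $C(\overline\Omega\times[-\alpha,\beta])$ and trivially satisfies every regularity requirement of Definition \ref{def1}. For the tail the bound $|\Delta_k(t_0)|\ge C/\lambda_k$ applies, and the computations that yielded (\ref{N61})--(\ref{N62}) and the bound on $\|f\|_{C(\Omega)}$ in Theorem \ref{th7.1} carry over verbatim, giving convergence under the Il'in-type condition (\ref{1.28}) on $\varphi_0$.

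For part 2 I would first establish necessity. Applying (\ref{um8}) and the orthonormality of $\{v_k\}$ yields, exactly as in (\ref{N16}), the identity $f_k\Delta_k(t_0)=\delta_k\varphi_{0k}$; for $k\in\mathbb{K}_0$ we have $\Delta_k(t_0)=0$ while $\delta_k\ne 0$ (by Remark \ref{deltaDelta}), so the orthogonality $\varphi_{0k}=0$ for $k\in\mathbb{K}_0$ is forced. Conversely, given these orthogonality relations, I would define $f_k=\delta_k\varphi_{0k}/\Delta_k(t_0)$ for $k\notin\mathbb{K}_0$ and choose $f_k$ arbitrarily for $k\in\mathbb{K}_0$; substituting into the forward-problem representation produces (\ref{um44})--(\ref{um46}). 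Non-uniqueness is encoded in the free choice of $\{f_k\}_{k\in\mathbb{K}_0}$. The $k\notin\mathbb{K}_0$ portion converges exactly as in part 1, while the $k\in\mathbb{K}_0$ portion is a finite sum of smooth functions by Lemma \ref{lem7.3}. The remaining conditions of Definition \ref{def1} together with (\ref{prob1.3}), (\ref{prob1.4}) and (\ref{um8}) are verified mode by mode, the per-mode verification being precisely the way the coefficients were defined.

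The main obstacle is the careful bookkeeping between the finite head and the infinite tail: the lower bounds of Lemmas \ref{lem7.2} and \ref{lem7.5} are only available on the tail, so one must separately argue that the head is well-defined (i.e.\ $\Delta_k(t_0)\ne 0$ there in part 1, or that the orthogonality makes the head vacuously consistent in part 2) and consists of finitely many smooth terms, and then reduce the tail analysis to the bounds already established. An additional subtlety is ensuring that recovering the over-determination condition (\ref{um8}) from the defining choice of the $f_k$ is not circular; this is handled by noting that for $k\in\mathbb{K}_0$ both sides of the mode-wise identity vanish identically, so the choice of $f_k$ on $\mathbb{K}_0$ is genuinely free.
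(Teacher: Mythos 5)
Your proposal is correct and follows essentially the same route as the paper: uniqueness and necessity come from the mode-wise identity $f_k\Delta_k(t_0)=\delta_k\varphi_{0k}$ together with $\delta_k\neq 0$, and existence is obtained by splitting the series according to $\mathbb{K}_0$ (and the threshold index), treating the finitely many exceptional terms as a smooth finite sum via Lemma \ref{lem7.3}, and running the convergence estimates of Theorem \ref{th7.1} on the remaining part using the lower bounds \eqref{est2}, \eqref{est53}. Your explicit head/tail bookkeeping for the indices below the threshold $k_1$ is a slightly more careful rendering of a point the paper leaves implicit, but it is not a different argument.
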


\begin{proof}\vspace{-0.5em}
To prove the theorem we need to show that the series (\ref{um44}), (\ref{um45}) and  (\ref{um46}) satisfy all the conditions of Definition \ref{def1}. This follows directly from the proof of Theorem \ref{th7.1}, and the proof is almost the same when any of conditions of Lemma \ref{lem7.2} or Lemma \ref{lem7.5} hold. For clarity, let us suppose that the assumptions of Lemma \ref{lem7.2} are satisfied. Series (\ref{um45}) and (\ref{um46}) are divided into two parts, following the structure given in \eqref{um44}. The second part of both these series, as stated in Lemma \ref{lem7.3}, is a finite sum of smooth functions. In the first part, the satisfaction of the series of the conditions of Definition \ref{def1} can be proved in the same way as for the series (\ref{tyech1}). Here we use the lower bound (\ref{est2}) for $\Delta_{k}(t_0)$. The convergence of the first part of \eqref{um44} is shown similarly to the series \eqref{um36}, while the second part is a sum of finitely many smooth functions.
\end{proof}

\section*{Conclusion}

In this work, a subdifusion equation with the Caputo fractional derivative of order $\rho \in (0,1)$ is studied for $t > 0$, while a classical parabolic equation is considered for $t < 0$. Following the work \cite{Dizin}, forward and inverse problems ($f(x)$ is unknown) are considered with a non-local Dezin type condition. The solutions are constructed using the classical Fourier method. The main contribution of the authors is that such non-local direct and inverse problems for mixed-type equations with fractional order have not been previously studied. In the process of studying these problems, we investigate the effect of the parameter $\lambda$ in Dezin's condition, on the existence and uniqueness of the solution. As proven above, it is shown that for certain values of $\lambda$, the uniqueness of the solution may fail, and in order to recover the solution, orthogonality conditions on the given functions $\varphi_0(x)$ and $F(x,t)$ are required.

In the future, it would be of interest to consider other types of fractional derivatives instead of the Caputo derivative, in order to investigate whether similar effects occur. Another promising direction is the study of inverse problems aimed at determining  the  fractional orders in mixed-type equations for such nonlocal problems.

\section*{Acknowledgments}

The author acknowledges financial support from the Ministry of Innovative Development
of the Republic of Uzbekistan, Grant No F-FA-2021-424.


\end{document}